\def\vol{\mathrm{vol\,}}
\def\uno{\mathbf 1}
\def\RR{\mathbb{R}}
\def\t{\text{\itshape{\textsf{T}}}}  
\DeclareSymbolFont{EulerScript}{U}{eus}{m}{n} 
\DeclareSymbolFontAlphabet\matheul{EulerScript}
\def\bar{\overline}
\def\emptyset{\varnothing}
\newtheorem{remark}[theorem]{Remark}
\newtheorem{example}[theorem]{Example}
\newenvironment{grafo}[1]{\begin{xy}0;<#1,0cm>: }{\end{xy}}
\newcommand{\Nodo}[5][5mm]{\POS *=<#1>[o]{#3}="#2"+#4(1.5)*{#5}}
\newcommand{\NodoO}[5][5mm]{\POS *=<#1>[o]{#3}*\frm{o}="#2"+#4(1.5)*{#5}}
\newcommand{\arcoD}[3][]{\ar@{-}"#2";"#3"#1}
\newcommand{\loopU}[2][]{\ar@(ur,ul)@{-}"#2";"#2"#1}
\newcommand{\loopD}[2][]{\ar@(dr,dl)@{-}"#2";"#2"#1}
\title{An algebraic analysis of the graph modularity}
\author{Dario Fasino\thanks{Department of Chemistry, Physics,
and Environment, University of Udine, Udine, Italy} 
\and Francesco Tudisco\thanks{Department of Mathematics, University of Rome Tor Vergata, Rome, Italy}}
\begin{document} 
\maketitle

\begin{abstract}
One of the most relevant tasks in network analysis is the detection of community structures, or clustering. 
Most popular techniques for community detection are based on the 
maximization of a quality function called modularity, which in turn is based upon particular quadratic forms associated to a real symmetric modularity matrix $M$, defined in terms of the adjacency matrix and a rank one \textit{null model} matrix. 
That matrix could be posed inside the set of relevant matrices involved in graph theory, alongside adjacency and Laplacian matrices. In this paper we analyze certain spectral properties
of modularity matrices, that are related to the community detection problem.
In particular, we propose a nodal domain theorem for the eigenvectors of $M$; we point out several relations occurring between graph's communities and nonnegative eigenvalues of $M$; and we derive a Cheeger-type inequality for the graph modularity.
\end{abstract}

\begin{keywords} 
Graph partitioning, community detection, nodal domains, graph modularity, spectral partitioning.
\end{keywords}

\begin{AMS}
05C50, 
05C70, 
15A18. 
\end{AMS}

\thispagestyle{plain}


\section{Introduction}

For the sake of conciseness, we say that 
a complex network is a graph occurring in real life. 
Relevant examples include the Internet and the world wide web, biological and social systems like food webs, economic networks, social networks, communication and distribution networks, and many others \cite{newman-book}. 
Various mathematical disciplines collaborate 
in the analysis and treatment of such complex systems;
and matrix analysis often plays an important role
beside e.g., discrete mathematics and computer science.
Here we consider a clear example of this collaboration,
namely, the subdivision of a network into ``clusters'' (typically connected subnetworks) having certain qualitative properties,
a task which is required in a number of applications. Two main research directions 
can be easily recognized within that topic, 
both having a considerable scientific literature: the \textit{graph partitioning} and the \textit{community detection} (or \textit{clustering}). 

Graph partitioning is the problem of dividing the vertices of a graph into a given number of disjoint subsets of given sizes such that the overall number or weight  of edges between such sets is minimized. The important point here is that the number and sizes of the subsets are, at least roughly, prescribed. For instance, the probably best known example of a graph partitioning problem is the problem of dividing an unweighted graph into two subsets of comparable size, such that the number of edges between them is minimized.

Community detection problems differ from graph partitioning in that the number and size of the subsets into which the network is divided are generally not apriori specified. Instead it is assumed that the graph is intrinsically structured into communities or groups of vertices which are more or less evidently  delimited,
the aim being to reveal the presence and the consistency of such groups. In particular it should be taken into account the possibility that no significant subdivisions exist for a given graph. A comprehensive review of 
methods for the solution of partitioning and 
clustering problems can be found in \cite[Ch.\ 8]{network-analysis} and \cite[Ch.\ 11]{newman-book}; 
see also \cite{santo-fortunato} 
for a good survey.

The question that mainly motivated the present work is indeed related with evaluating the quality of a particular division of a network into communities\footnote{
Our use of the term ``community'' hereafter
makes no reference to its meaning in social sciences
and other disciplines.
We limit ourselves to its common meaning
in the network analysis context. Accordingly, we may also use 
the term ``cluster'' as alternative to ``community'':
\textit{``Clustering is a synonym for the decomposition of a set of entities into natural groups''} \cite{network-analysis}.}  
and providing efficient,
mathematically sound methods and estimates
to locate them. As underlined in \cite{newman-modularity} and \cite{newman-girvan},  
\textit{``a good division of a network into communities is not merely one in which there are few edges between communities; it is one in which there are fewer than expected edges between communities''}. Newman and Girvan  therefore introduced a measure of
the quality of a particular division of a network, which
they call \textit{modularity}. Despite several other quality functions have been proposed in the last ten years
for analogous purposes, 
the modularity is by far the most popular quality function for 
evaluating the quality of a graph partitioning,
and is currently adopted by various successful partitioning algoritms, e.g., the so-called {\em Louvain method}
\cite{Louvain_method}. 
The interesting fact here, and
the issue that has drawn our attention to this topic,
is that the modularity, as well as other 
related graph-oriented topological invariants,
is defined in terms of certain quadratic forms associated to a matrix $M$, called {\em modularity matrix}.
That matrix can be considered as one of the relevant matrices
naturally associated to a graph, together with 
adjacency and Laplacian matrices.

The main aim of this paper is to analyze certain spectral properties of modularity matrices that are relevant to the community detection problem.
In the subsequent part of this Introduction we 
provide the notational and conceptual background for the subsequent discussion. Sections 2 and 3 introduce
the modularity matrix of a graph, its relationships with 
the modularity of a (sub-)graph and with the Laplacian matrix, 
and outline the special role of one of its eigenvalues.
In Section 4 we present a nodal domain theorem for the eigenvectors of modularity matrices. The subsequent sections
are devoted to the analysis of various connections 
between optimal partitionings of a graph and 
nonnegative eigenvalues of its modularity matrix.
Main results are summarized in the concluding Section 8,
which comprises also our final comments and 
possible directions for further research.

\subsection{Notations and preliminary definitions}

To avoid any ambiguity we fix here our notations and some preliminary definitions. We give a brief review of standard concepts from algebraic graph theory that we will use extensively throughout the paper, referring the reader to e.g.,
\cite[Ch.\ 2]{network-analysis} of 
\cite[Ch.\ 6]{newman-book} 
for a careful and succinct introduction to the topic.

From a purely algebraic point of view a graph $G$ consists of a triple $G= (V,E,\omega)$ where $V$ is the set of vertices (or nodes), $E$ is the set of edges and formally is a subset of $V\times V$, and $\omega:E\to \RR^+$ is a nonnegative weight function defined over $E$, 
representing the strength of the relation modeled by the edges. 
We shall always assume that a graph $G$ 
is finite, simple, connected, not oriented. 
We always identify $V$ with $\{1,\dots,n\}$. We use the simpler notation $G = (V,E)$ when $\omega(ij) = 1$, that is, edges are not weighted.



If not otherwise specified, the symbol $A$ will always denote 
the adjacency matrix of $G$, that is, $A\equiv(a_{ij})$
such that $a_{ij} = \omega(ij)$ iff $ij \in E$, and $a_{ij}=0$ otherwise. In particular, $A$ is a symmetric, irreducible, componentwise nonnegative matrix.
For the sake of clarity, further definitions are listed
hereafter:

\begin{itemize}
\item If $ij  \in E$ we write $i\sim j$ and say that $i$ and $j$ are adjacent.

\item For any $i\in V$, $d_i$ denotes its (generalized)
degree, $d_i=\sum_{j: ij\in E}\omega(ij)$. 
Moreover, we let 
$d = (d_1,\ldots,d_n)^\t$
and $D = \mathrm{Diag}(d_1,\ldots,d_n)$.

\item For any $S\subseteq V$ we denote by
$\bar S$ the complement $V \setminus S$, and let $\vol S = \sum_{i \in S}d_i$ be the {\em volume} of $S$. Correspondingly,  
$\vol G= \sum_{i \in V}d_i$ denotes the volume of the whole graph.

\item 
A partition of $V$ is a collection of subsets 
$\matheul{P} = \{S_1,\ldots,S_k\}$ such that 
$\cup_iS_i = V$ and $S_i\cap S_j = \varnothing$ for $i\neq j$.

\item For $S \subseteq V$, we denote by $A(S)$ the principal submatrix of $A$ made by the rows and columns 
whose indices belong to $S$. 
Moreover,
we denote by $G(S)$ the subgraph induced by the vertices in $S$, that is the subgraph of $G$ whose adjacency matrix is $A(S)$.

\item $\uno$ denotes the vector of all ones whose dimension depends on the context.

\item The cardinality of a set $S$ is denoted by $|S|$. 
In particular, 
$|V|=n$.

\item For any $S \subseteq \{1,\dots,n\}$ we let $\uno_S$ be its \textit{characteristic vector}, defined as $(\uno_S)_i =1$ if $i \in S$ and $(\uno_S)_i=0$ otherwise.

\item
For any subsets $S,T\subseteq V$ let
$E(S,T)$ be the set of edges joining vertices in $S$
with vertices in $T$; and let
$$
   e(S,T) = \uno_S^\t A \uno_T =
   \sum_{i \in S}\sum_{j \in T}\omega(ij) .
$$ 
Note that, if $G = (V,E)$ is unweighted and loopless then
$e(S,T) = 2|E(S,T)|$.
For simplicity, we 
use the shorthands $e_{\mathrm{in}}(S) = e(S,S)$
and $e_{\mathrm{out}}(S) = e(S,\bar S)$,
so that we have also
$$
   \vol S = e_{\mathrm{in}}(S) + e_{\mathrm{out}}(S) .
$$

\item For a matrix $A$ and a vector $x$, we write $A\geq O$ or $x\geq 0$ (resp. $A>O$ or $x>0$) to denote  componentwise nonnegativity (resp., positivity);
and $\rho(A)$ denotes the spectral radius of $A$. 

\item If $X$ is a symmetric matrix then its eigenvalues are ordered as $\lambda_1(X)\geq \cdots\geq \lambda_n(X)$, unless otherwise specified.

\end{itemize}

We will freely use familiar properties of
eigenvalues of symmetric matrices,
and fundamental results in
Perron-Frobenius theory, see e.g., 
\cite{positive-book,bhatia}. 
For completeness, we recall hereafter 
some important facts concerning the symmetric eigenvalue problem:
\begin{itemize}
\item 
Let $A\in\mathbb{R}^{n\times n}$ be a symmetric matrix 
and let $Z\in\mathbb{R}^{n\times (n-k)}$ be a matrix 
with orthonormal columns.
Then, for all $i = 1,\ldots,n-k$,
\begin{equation}   \label{eq:interlacing}
   \lambda_i(A) \geq \lambda_i(Z^\t AZ) \geq \lambda_{i+k}(A) .
\end{equation}
\item 
Let $A\in\mathbb{R}^{n\times n}$ be a symmetric matrix 
and let $B\in\mathbb{R}^{(n-k)\times (n-k)}$ 
be a principal submatrix of $A$. Then, for all $i = 1,\ldots,n-k$,
\begin{equation}   \label{eq:interlacing2}
   \lambda_i(A) \geq \lambda_i(B) \geq \lambda_{i+k}(A) .
\end{equation}
\item
Let $A$ be a real symmetric matrix of order $n$
and $v\in\mathbb{R}^n$. Then, for $i = 1,\ldots,n-1$,
\begin{equation}   \label{eq:Weyl}
   \lambda_i(A) \geq \lambda_{i+1}(A+vv^\t)
   \geq \lambda_{i+1}(A) .
\end{equation}
\end{itemize}

\subsubsection{The modularity matrix}


The \textit{modularity matrix} of the graph is defined as follows:
\begin{equation}   \label{eq:M}
   M = A- \frac{1}{\vol G}  dd^\t.
\end{equation}
Modularity matrices have been introduced originally 
for unweighted graphs $G = (V,E)$; in that case,
the number $a_{ij}$ indicates the presence of
an edge between nodes $i$ and $j$, 
whereas $d_id_j/\vol G$ 
estimates the expected number of edges
between vertices $i$ and $j$, if edges in the graph were placed with an uniformly random distribution,
according to the given degree sequence $d_1,\ldots,d_n$. 
Therefore the $(i,j)$-entry 
$m_{ij} = a_{ij} - (d_id_j)/(\vol G)$ of $M$ measures the 
disagreement between the expected number 
and the actual number of edges joining $i$ and $j$. 
It is a common practice to extend rather informally
this definition to any weighted
graph $G = (V,E,\omega)$. In the next paragraph, 
we outline a formal  
justification of this rather natural extension.

\begin{remark}
In any unweighted graph, 
the number $d_id_j/\vol G$ is always an upper bound
on the probability that $ij \in E$, assuming that 
edges are placed in $G$ independently 
at random, conditionately to the given degrees. 
In fact, that number is the first term in a sign-alternating series expressing the actual probability, where
successive terms represent the probability that 
$i$ and $j$ are connected by multiple edges.
If $d_id_j/\vol G \ll 1$ then the alternating series is
rapidly convergent, and the bound is a good approximation to the true value. On the other hand, 
graphs of practical interest may contain node pairs with
$d_id_j/\vol G > 1$ and weighted edges.
In any case, a more principled motivation of the 
rank-one correction in the modularity matrix, 
which carries over the weighted graph case,
relies on the so-called Chung-Lu random graph model,
to be recalled in the next paragraph.
\end{remark}

\subsubsection{The Chung-Lu random graph model}

The Chung-Lu random graph model is one of the most
widespread and successful models for the analysis
of large graphs with general degree distributions.
Let $w = (w_1,\ldots,w_n)^\t > 0$ be a vector  
fulfilling the condition $\max_iw_i^2 < \sum_{i=1}^nw_i$. We say that a graph $G = (V,E)$ follows the 
Chung-Lu random graph model with parameter $w$,
denoted by $\mathcal{G}(w)$, if the existence of the edge
$ij\in E$ is determined by an independent Bernoulli trial
with probability
$p_{ij} = w_iw_j/(\sum_{i=1}^nw_i)$.
That model has been popularized in
\cite[Ch.\ 5]{chung-lu-networks}; and various 
statistical properties have been described e.g., in 
\cite{arcolano:moments,higham-random-model}.
A basic and very useful property of this model
is that, if $G = (V,E)$
is a random graph drawn from $\mathcal{G}(w)$,
then the expected degree of $i\in V$ is exactly $w_i$.
Consequently, if only the degree vector $d$
is known, it is reasonable to assume $w = d$. 
Actually, this equality leads to an 
asymptotically unbiased estimator of $w$ \cite{arcolano:moments}.
Hereafter, we propose a generalization of the Chung-Lu model
which is convenient for working with weighted graphs. 

\begin{definition}   \label{def:wCL}
Let $w = (w_1,\ldots,w_n)^\t > 0$, and
let $X(p)$ be a nonnegative random variable parametrized
by the scalar parameter $p\in[0,1]$, whose expectation is 
$\mathbb{E}(X(p)) = p$. 
We say that a weighted graph $G = (V,E,\omega)$ follows the 
{\em $X$-weighted Chung-Lu random graph model} $\mathcal{G}(w,X)$ 
if, for all $i,j\in V$, 
$\omega(ij)$ are independent random variables
distributed as $X(p_{ij})$ where
$p_{ij} = w_iw_j/\sum_{i=1}^nw_i$,
with the convention that 
$ij\in E \Leftrightarrow \omega(ij) > 0$, that is,
edges with zero weight are removed from $G$.
\end{definition}

We point out that $\mathcal{G}(w)$ is the special case
of $\mathcal{G}(w,X)$ where $X(p)$ is the Bernoulli trial 
with success probability $p$. On the other hand, if $X(p)$
has a continuous part, then $\mathcal{G}(w,X)$
may contain graphs with generic weighted edges.
In any case, as in the original Chung-Lu model, 
if $G$ is a random graph drawn from $\mathcal{G}(w,X)$ then 
the expected degree of node $i$ is
$$
   \mathbb{E}(d_i) = \sum_{j=1}^n \mathbb{E}(\omega(ij))
   = \sum_{j=1}^n p_{ij}
   = w_i .
$$


\subsubsection{The Laplacian matrix}

The modularity matrix \eqref{eq:M}
is a rank one perturbation of the adjacency matrix, which is still symmetric but looses the nonnegativity of its entries. The kernel of $M$ is nontrivial and, indeed, $\uno$ always is a nonzero element in $\ker M$. This is reminiscent of another key matrix associated to a graph $G$: the Laplacian matrix. Such matrix is defined as $L = D-A$, where $D$ denotes the diagonal matrix with diagonal entries $d_1,\dots,d_n$.
A huge literature has been developed around $L$, its 
spectral properties, and their connections with 
combinatorial and topological properties of $G$,
see e.g., \cite{chung,expanders} and references therein; 
in fact, this matrix can be thought as a discrete version 
of the Laplacian differential operator,  
under many respects. 

The bilinear form associated to $L$ admits the expression
\begin{equation}   \label{eq:vLv}
   v^T L v = \sum_{ij \in E}
   \omega(ij) (v_i-v_j)^2, 
\end{equation}
where the sum ranges over all edges in the graph, each edge being counted only once. Thus,
$L$ is symmetric and positive semidefinite; zero always is a eigenvalue of $L$, with associated eigenvector $\uno$, 
and that eigenvalue
is simple if and only if $G$ is connected.  
Conventionally the eigenvalues of $L$ are ordered from  
smallest to largest; for a connected graph, 
$0=\lambda_1(L)< \lambda_2(L)\leq \dots \leq \lambda_n(L)$.

\subsubsection{Nodal domains}

The study of the spectral properties of the Laplacian matrix has 
originated
one of the best known methods for graph partitioning, the \textit{spectral partitioning} \cite[\S 11.5]{newman-book}. The idea was pioneered by Fiedler 
in \cite{fiedler-connectivity,fiedler-vector}, where he observed that a strong relation exists among connectivity 
properties of $G$, the second smallest eigenvalue of $L$ (the smallest one being zero), and the changes of signs of the entries of any eigenvector relative to such eigenvalue. 
Following Fiedler's works, the number $\lambda_2(L)$
is usually called {\em algebraic connectivity} of $G$
and denoted by $a(G)$; furthermore, it is a well established practice to call {\em Fiedler vector} any eigenvector associated to it.

Let us recall a couple of definitions and
relevant results. 
Inspired by Courant's nodal domains theorem (which bounds the number of nodal domains of eigenfunctions of
the Laplacian operator on smooth Riemannian manifolds), 
nodal domains induced by a real vector $u$ are commonly defined as follows:\footnote{
Unlike their continuous analogous, in 
the present 
context nodal domains are located by sign variations rather than zero values.  
Therefore some authors call them \textit{sign domains}
\cite{nodal-domain-theorem}. We prefer to maintain the ``classical" terminology.}

\begin{definition}   \label{def:strong_nd}
Let $0 \neq u \in \RR^n$. 
A subset $S \subseteq V$ is a \emph{strong nodal domain} of $G$ induced by $u$ if the subgraph $G(S)$ induced on $G$ by $S$ 
is a (maximal) connected component of either
$\{i: u_i > 0\}$ or $\{i: u_i < 0\}$.
\end{definition}

\begin{definition}   \label{def:weak_nd}
Let $0 \neq u \in \RR^n$. 
A subset $S \subseteq V$ is a \emph{weak nodal domain} of $G$ induced by $u$ if the subgraph $G(S)$ induced on $G$ by $S$ 
is a (maximal) connected component of either
$\{i: u_i \geq 0\}$ or $\{i: u_i \leq 0\}$ and contains at least one node $i$
where $u_i \neq 0$.
\end{definition}

Actually, the previous definitions are a slight modification of the terminology used in e.g., 
\cite{nodal-domain-theorem,duval-nodal-domains},
but their meaning is unchanged.
For any connected graph $G$, $\lambda_1(L) = 0$ is simple and 
has $\uno$ as associated eigenvector.
It clearly follows that the only possible nodal domain for $\lambda_1(L)$ is $G$ itself. On the other hand, since $L$ is real and symmetric, each other eigenvector of $L$ can be chosen to be real and orthogonal to $\uno$, that is, any eigenvector $u$ of $L$ 
that is not constant 
has at least two components of different signs. Therefore any such $u$  has at least two nodal domains. 
Fiedler noted in \cite[Cor.\ 3.6]{fiedler-vector}
that the weak nodal domains induced by 
any eigenvector associated to $a(G)$
are at most two, and thus are exactly two. 
Many authors derived analogous results for the other eigenvalues of $L$ afterward
\cite{nodal-domain-theorem, duval-nodal-domains, powers-graph-eigenvector}. 
The following {\em nodal domain theorem} summarizes their work:

\begin{theorem}\label{teo:laplacian-strong-domains}
Let $L$ be the Laplacian matrix of a connected graph.
Let $\lambda$ 
be an eigenvalue of $L$ and let $u$
be an associated eigenvector.
Let $\ell$ and $\ell'$ be the number of eigenvalues of $L$
that are not larger than $\lambda$ and
strictly smaller than $\lambda$, respectively, counted with their multiplicity.
Then $u$ induces at most $\ell$ strong nodal domains 
and at most $\ell'+1$ weak nodal domains.
\end{theorem}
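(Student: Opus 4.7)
The plan is to derive both bounds from the Courant--Fischer min-max characterization of the eigenvalues of $L$, applied to a $k$-dimensional test subspace built from $u$ and the nodal decomposition it induces.

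For the strong bound, suppose $u$ has $k$ strong nodal domains $S_1,\ldots,S_k$ and set $w_j=|u|\uno_{S_j}$, so that the $w_j$ are nonnegative and have pairwise disjoint supports, hence are orthogonal. Two computations are needed. First, summing $u_i\cdot(Lu)_i=\lambda u_i^2$ over $i\in S_j$ and rewriting the left-hand side through the edge formula \eqref{eq:vLv} applied to $w_j$, one obtains $w_j^\t L w_j\le\lambda\|w_j\|^2$: the only residual term comes from edges leaving $S_j$ and is nonpositive because, by the maximality in Definition~\ref{def:strong_nd}, $u_i$ and $u_k$ have opposite signs on such edges. Second, for $j\ne j'$ the disjoint supports give $w_j^\t L w_{j'}=-w_j^\t A w_{j'}$; this is nonpositive if $S_j,S_{j'}$ have opposite signs and actually zero if they share signs, because two distinct same-sign strong nodal domains cannot be joined by any edge of $G$.

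Let $M$ be the $k\times k$ symmetric matrix representing $L$ in the orthonormal basis $w_j/\|w_j\|$ of $W=\mathrm{span}(w_1,\ldots,w_k)$, so that $M_{jj}\le\lambda$ and $M_{jj'}\le 0$. Conjugating by the diagonal sign matrix $D=\mathrm{diag}(\epsilon_j)$, where $\epsilon_j$ is the sign of $u$ on $S_j$, preserves the spectrum; moreover, the identity $u=\sum_j\epsilon_j w_j\in W$ together with $Lu=\lambda u$ shows that $(\|w_1\|,\ldots,\|w_k\|)^\t$ is a strictly positive eigenvector of $DMD$ at eigenvalue $\lambda$. Applying Perron--Frobenius to the nonnegative symmetric matrix $DMD+tI$ (for $t$ large enough) identifies $\lambda$ as its largest eigenvalue; hence $M\preceq\lambda I$, and Courant--Fischer gives $\lambda_k(L)\le\lambda$, i.e.\ $k\le\ell$. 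The weak case proceeds along the same lines with $w_j=|u|\uno_{W_j}$: although distinct weak nodal domains can overlap, they do so only at zeros of $u$, so the supports of the $w_j$ remain disjoint and the preceding analysis yields a $k$-dimensional subspace on which $L$ has Rayleigh quotient at most $\lambda$, the maximum being attained along $u$. Restricting to the orthogonal complement of $u$ inside this subspace produces a $(k-1)$-dimensional subspace on which the quotient is strictly smaller than $\lambda$, provided the Perron eigenvalue of $DMD$ is simple; Courant--Fischer then promotes the conclusion to $\lambda_{k-1}(L)<\lambda$, i.e.\ $k\le\ell'+1$.

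The main obstacle is the passage from the sign pattern of $M$ to the spectral bound $M\preceq\lambda I$: neither the nonpositivity of its off-diagonal entries nor the upper bound on its diagonal entries suffices by itself, and one must exploit that $u$ itself lies in $W$ as a $\lambda$-eigenvector in order to invoke Perron--Frobenius through the sign conjugation by $D$. In the weak case one additionally needs the simplicity of this Perron eigenvalue to secure the strict inequality; if the sign pattern of $DMD$ fails to be irreducible, this is recovered by a small generic perturbation of $L$ together with the upper semicontinuity of nodal-domain counts.
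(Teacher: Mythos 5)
Your argument for the \emph{strong} bound is correct and complete: the cut-off vectors $w_j$ are orthogonal, the Rayleigh--Ritz compression $M$ of $L$ onto their span has diagonal entries at most $\lambda$ and nonpositive off-diagonal entries that vanish between same-sign domains, and since $u=\sum_j\epsilon_jw_j$ lies in that span and satisfies $Lu=\lambda u$, the sign-conjugated matrix $DMD$ has a strictly positive $\lambda$-eigenvector; the Collatz--Wielandt bound then forces $\lambda_{\max}(M)\le\lambda$ even when $DMD$ is reducible, and Courant--Fischer gives $k\le\ell$. Be aware that the paper does not prove Theorem~\ref{teo:laplacian-strong-domains} at all (it is quoted from the literature); the closest thing to a proof in the paper is that of the analogous Theorem~\ref{teo:main}, which restricts the matrix to the principal submatrices indexed by the domains and applies Perron--Frobenius to each block plus interlacing of principal submatrices. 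Your compression onto $\mathrm{span}(w_1,\ldots,w_k)$ is a genuinely different realization of the same Perron--Frobenius-plus-interlacing idea, and it adapts cleanly to $L$ because the diagonal term is absorbed into the quadratic form.

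The \emph{weak} bound, however, has a genuine gap. Your argument needs $\lambda$ to be a simple eigenvalue of the compression $M$, i.e.\ $DMD$ irreducible; but $(DMD)_{jj'}$ is proportional to $w_j^\t A\, w_{j'}$, which vanishes for two adjacent weak domains whenever every edge joining them passes through a zero of $u$. This really happens: on the path $1\sim 2\sim 3$ the vector $u=(1,0,-1)^\t$ is an eigenvector of $L$ with $\lambda=1$, it has the two weak domains $\{1,2\}$ and $\{2,3\}$, and the compression is $\lambda I_2$, so $\lambda$ has multiplicity two in $M$. In general your method only yields $k\le\ell'+b$, where $b$ is the number of irreducible blocks of $DMD$. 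The proposed repair by generic perturbation does not close this: a perturbation of the edge weights changes $\lambda$, and when $\lambda$ is a multiple eigenvalue of $L$ (precisely the case in which $\ell>\ell'+1$ and the statement is delicate) the eigenvalue splits, so $\ell'$ for the perturbed eigenpair can exceed $\ell'$ for the original one and the perturbed eigenvectors need not converge to $u$ at all; the claimed semicontinuity of the weak-domain count is also asserted rather than proved. The standard proofs of the $\ell'+1$ bound (Davies--Gladwell--Leydold--Stadler, Duval--Reiner) use a different device at this point: one takes a combination of only $\ell'+1$ of the $w_j$ orthogonal to all eigenvectors with eigenvalue below $\lambda$, deduces that it must itself be a $\lambda$-eigenvector vanishing on the unused domains, and reaches a contradiction via connectivity. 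You need an argument of that kind to finish the weak case.
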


\section{Modularity of a subgraph}

A central problem in graph clustering is to look for a quantitative definition of community.
Although all authors agree that a community should be a 
connected group of nodes that is more densely connected among each other than with the rest of the network,
as a matter of fact no definition is universally accepted.
A variety of merit functions 
to quantify the strength of a subset $S\subset V$
as a community in $G$ is listed in \cite[Ch.\ 8]{network-analysis}; 
all of them are essentially based on a trade-off
between the total weight of edges insisting on vertices in $V$
(which should be ``large'') 
and the one of the edges connecting vertices in $V$ 
with vertices outside $V$  (which should be ``small'',
for a ``good'' community).

Fortunato in its comprehensive report \cite{santo-fortunato} 
classifies various definitions of community
according to whether they are based on graph-level properties, subgraph-level properties, or vertex similarity,
and underlines that the global definition  
based on the modularity quality function introduced by Newman and Girvan in \cite{newman-girvan} is by far the most popular definition. Their definition can be informally stated as follows: 
A subset of vertices $S \subseteq V$ forms a community
if the subgraph $G(S)$ 
contains a larger number of edges than expected.
Obviously, such statement is not rigorous, until one defines
the probability distribution underlying the concept of
``expected number''. Doubtless, the most simple and natural guess
is to assign an equal probability 
to the connection between any two nodes in the network. 
The corresponding random graph model 
is known as Erd\"os-R\'enyi model. That model 
is at the basis of 
various successful approaches to community detection
\cite{multires-2,multires-1,TVDN_CPM}.
In this work, we follow 
\cite{newman-eigenvectors,newman-modularity,newman-girvan}
and assume, instead, the Chung-Lu random graph model
with parameter $d$ 
as reference.

Given a graph $G = (V,E,\omega)$, consider a subset of 
vertices $S\subseteq V$. 
For graphs following the (weighted) 
Chung-Lu model with parameter $d$, the overall weight 
of edges joining vertices in $S$ 
can be estimated by
$$
   \sum_{i\in S} \sum_{j\in S} \frac{d_id_j}{\vol G}
   = \frac{(\vol S)^2}{\vol G} .
$$
Consequently, we define the {\em modularity} 
of $S$ as 
\begin{equation}   \label{modularity}
   Q(S) = e_{\mathrm{in}}(S) - (\vol S)^2/\vol G . 
\end{equation}
If that difference
is positive then there is a clear indication that the subgraph
$G(S)$ contains ``more edges'' 
than expected from 
the reference model. This fact can be considered as 
a clue (apart from connectedness)
that $S$ is a closely knit set of vertices and
as such, a possible community inside $G$.

An easy computation exploiting the identities 
$\vol S = e_{\mathrm{in}}(S) + e_{\mathrm{out}}(S)$  
and $\vol G - \vol S = \vol \bar S$ reveals that
\begin{align}   \label{modularity-3}
   Q(S) \notag
   &=\textstyle{ \vol S - e_{\mathrm{out}}(S) - 
   \frac{(\vol S)^2}{\vol G}}    \\ 
   &= \notag \textstyle{\vol S (1 - 
   \frac{\vol S}{\vol G}) - e_{\mathrm{out}}(S)}   \\ 
   &= \textstyle{\frac{\vol S \, \cdot \, \vol \bar S}{\vol G}
   - e_{\mathrm{out}}(S) } .
\end{align}
Such relation shows that $Q(S)=Q(\bar S)$. Therefore, modularity is a quality of the \textit{cut} $\{S,\bar S\}$ rather than of $S$ itself. 
Moreover it reveals that $Q(S)$ is large when both $S$ and its complement $\bar S$ have comparable volumes 
(in fact $\vol S \,\vol \bar S / \vol G$ is large when 
$\vol S \approx \vol \bar S \approx \frac12 \vol G$) and the 
overall weight 
of edges elapsing between $S$ and $\bar S$ is small. 
Consequently, \eqref{modularity-3} 
bares that the modularity $Q(S)$ 
shares the structure of virtually all reasonable
clustering indices 
\cite[Ch.\ 8]{network-analysis}, consisting of the difference
between $\vol S\, \vol \bar S / \vol G$, which 
is a term measuring the density of the ``clusters'' $S$ and $\bar S$, and
$e_{\mathrm{out}}(S)$, 
which quantifies the sparsity of their connection.
Furthermore, the resulting equalities $Q(\emptyset) = Q(V)=0$ 
formalize the common understanding that neither the emptyset nor the whole graph constitute a community. 

It is almost immediate to recognize that
$e_{\mathrm{in}}(S) = \uno_S^\t A \uno_S$ and 
$\vol S = \uno_S^\t d$. 
Hence, we can express
the modularity \eqref{modularity} in terms of the modularity matrix \eqref{eq:M} as follows:
\begin{equation}\label{modularity-via-M}
   Q(S) = \uno_S^\t M\uno_S .
\end{equation}

\begin{remark}
In principle other vectors can be chosen in place of $d$ 
inside \eqref{eq:M},
depending on the 
\textit{null model} one is assuming 
for the distribution of the edges in $G$. 
For example, if 
$G$ is unweighted and 
the null model assumed is the Erd\"{o}s-R\'enyi random graph model, in which 
every edge has probability $p$ to appear,
then 
the appropriate definition for the modularity matrix of $G$ would be $M = A - p\uno \uno^\t$ with 
$p = \vol G/n^2$, so that $Q(V) = \uno^\t M\uno = 0$. 
In this case, the resulting modularity matrix 
allows us to express 
by means of a formula analogous to \eqref{modularity-via-M}
certain modularity-type merit functions based on $2$-state Potts Hamiltonian functions 
adopted in, e.g., \cite{multires-1,TVDN_CPM}.
\end{remark}

In a somehow heuristic way at this stage,  we see from \eqref{modularity-via-M} that the existence of a subset $S\subseteq V$ having positive modularity is related with the positive eigenvalues of $M$ and their corresponding eigenspaces. 
In fact, if $\mathbb{F}_n =\{0,1\}^n$ 
is the set of binary $n$-tuples, the search of a maximal modularity subgraph is formalized by the optimization problem
\begin{equation}\label{bin}
  \max_{x \in \mathbb{F}_n} x^\t Mx .
\end{equation}
The problem as is stated is clearly NP-complete,
so a standard and widely used procedure is to move to a continuous relaxation, for example,
\begin{equation}   \label{cont}
   \max_{\substack{x \in \RR^n \\ x^\t x=1}} x^\t Mx ,
\end{equation}
which is solved by an eigenvector associated to 
the largest eigenvalue of $M$, properly normalized. 
Once a solution $\tilde x$ for the latter problem \eqref{cont} is computed, the sign vector 
$s = \text{sign}(\tilde x)$ is chosen as an approximate solution for \eqref{bin}. 
Note that such $s$ realizes the best approximation 
to $\tilde x$ in the $L^p$ sense, that is 
$\|s-\tilde x\|_p = \min_{x \in \mathbb{F}_n}\|x-\tilde x\|_p$, 
for $p\in [1,\infty]$. 
The spectral analysis of $M$ and of the maximal subgraphs 
induced by the change of signs in its eigenvectors (nodal domains) is the central topic of Sections \ref{sec:nodal}.

In what follows, we adopt from 
\cite{newman-modularity,newman-girvan} the following definitions:

\begin{definition}   \label{def:community} 
A {\em module} in a given graph $G$ is a subgraph
having positive modularity. A graph is {\em indivisible}
if it has no modules, and {\em divisible} otherwise.
\end{definition}

Probably, the main reason of the success of modularity
as a quantitative measure of community strength is the fact that
modules having significant size and 
modularity are typically decent indicators of community structure.

\begin{remark}   \label{rem:indivisible}
Cliques and star graphs are 
indivisible graphs.
On the other hand, indivisible graphs are rather scarce.
Indeed, a simple computation based on the formula 
\eqref{modularity} shows that, if $i,j\in V$ are two 
vertices joined by an edge, and 
$$ 
   d_i + d_j < \sqrt{2\,\omega(ij) \vol G} ,
$$
then $Q(\{i,j\}) > 0$.   
Consequently, a graph is divisible
if it has at least one edge fulfilling the previous inequality,
a condition which is easily met in practice.
\end{remark}

\section{The algebraic modularity of a graph}

Since the pioneering works by Fiedler
\cite{fiedler-connectivity,fiedler-vector}
the algebraic connectivity of a connected graph 
$G$ is classically defined
as the smallest positive eigenvalue of its Laplacian matrix:
$$
   a(G) = \min_{x^\t\uno = 0} \frac{x^\t L x}{x^\t x} .
$$
Analogously, we can define the {\em algebraic modularity of $G$}
as
\begin{equation}   \label{eq:def_m(G)}
   m(G) = \max_{x^\t\uno = 0} \frac{x^\t M x}{x^\t x} .
\end{equation}
Differently to \eqref{cont}, 
any vector $x$ attaining the maximum in \eqref{eq:def_m(G)}
must have entries with opposite signs. 
We will see afterward that $m(G)$ plays a relevant role
in the community detection problem, exactly in the same way
as $a(G)$ with respect to the partitioning problem.
Furthermore, in tandem with Definition \ref{def:community},
it is rather natural to say that $G$ is
{\em algebraically indivisible} if
its modularity matrix has no positive eigenvalues.
For example, 
cliques and star graphs are 
algebraically indivisible graphs.

\begin{remark}   \label{rem:m(G)}
The number $m(G)$ is the largest eigenvalue of $M$
after deflation of the subspace $\langle\uno\rangle$,
which is an invariant subspace associated to the eigenvalue $0$.
More precisely, we have $\lambda_1(M) = \max\{m(G),0\}$.
Hence, we can say that $m(G) = \lambda_1(M)$ 
if and only if $m(G) \geq 0$.
\end{remark}

We point out that any algebraically indivisible graph is  
indivisible as well.
Indeed, the existence of a subgraph $S$ having positive modularity implies that $M$ has at least one positive eigenvalue: $\lambda_1(M) \geq \uno^\t_S M\uno_S/\uno^\t_S\uno_S = Q(S)/|S| > 0$.
We shall explore in greater detail in Section \ref{sec:6}
the relationship between 
divisibility of $G$ 
and positive eigenvalues of $M$. For the moment,
the following argument shows that a better bound than $Q(S)\leq |S|\lambda_1(M)$ can be derived:

\begin{lemma}   \label{lem:g_G'}
For any $S\subseteq V$ we have $Q(S) \leq m(G)|S||\bar S|/n$.
\end{lemma}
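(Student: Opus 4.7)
The plan is to exploit the fact that $\uno \in \ker M$, so that the quadratic form $x^\t M x$ is invariant under translation by multiples of $\uno$. This allows us to replace $\uno_S$ inside $Q(S) = \uno_S^\t M \uno_S$ with its component orthogonal to $\uno$, and then apply the Rayleigh-quotient characterization \eqref{eq:def_m(G)} of $m(G)$.

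Concretely, I would first set aside the trivial cases $S = \emptyset$ and $S = V$, where both sides of the inequality vanish. For $\emptyset \ne S \subsetneq V$, decompose
\[
   \uno_S = \alpha \uno + v, \qquad \alpha = \frac{\uno^\t \uno_S}{n} = \frac{|S|}{n}, \qquad v \perp \uno.
\]
A direct computation gives
\[
   v^\t v = \uno_S^\t \uno_S - \frac{|S|^2}{n} = |S| - \frac{|S|^2}{n} = \frac{|S|\,|\bar S|}{n},
\]
and in particular $v \ne 0$. Since $M\uno = 0$, we have $\uno_S^\t M \uno_S = v^\t M v$, so $Q(S) = v^\t M v$.

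Now $v \perp \uno$, so the variational formula \eqref{eq:def_m(G)} yields $v^\t M v \le m(G)\, v^\t v$, independently of the sign of $m(G)$. Substituting the expression for $v^\t v$ gives the desired bound
\[
   Q(S) \;=\; v^\t M v \;\le\; m(G)\,\frac{|S|\,|\bar S|}{n}.
\]
There is no real obstacle here; the only thing to keep an eye on is that the inequality $v^\t M v \le m(G)\, v^\t v$ remains valid when $m(G)<0$, which is guaranteed because it is simply an upper bound on the Rayleigh quotient on the invariant subspace $\uno^\perp$.
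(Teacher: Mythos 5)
Your proof is correct and is essentially identical to the paper's: both write $\uno_S = \alpha\uno + v$ with $\alpha = |S|/n$ and $v = \uno_S - \alpha\uno \perp \uno$, compute $v^\t v = |S||\bar S|/n$, and apply the variational definition \eqref{eq:def_m(G)} together with $M\uno = 0$. Your explicit handling of the degenerate cases $S=\emptyset$ and $S=V$ is a minor extra care the paper omits.
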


\begin{proof}
Let $\alpha = |S|/n$. Then, the vector $\uno_S-\alpha\uno$
is orthogonal to $\uno$ and moreover,
$$
   (\uno_S-\alpha\uno)^\t(\uno_S-\alpha\uno) = 
   (\uno_S-\alpha\uno)^\t \uno_S = 
   |S| - \alpha |S| = \frac{|S||\bar S|}{n} .
$$
Recalling that $M\uno = 0$ and the definition \eqref{eq:def_m(G)}
we have
$$
   Q(S) = \uno_S^\t M\uno_S = 
   (\uno_S-\alpha\uno)^\t M(\uno_S-\alpha\uno) \leq
   m(G) (\uno_S-\alpha\uno)^\t(\uno_S-\alpha\uno) ,
$$
and we complete the proof.
\end{proof}

It is worth noting that the modularity matrix
$M$ can be expressed as the difference of two Laplacian matrices.
Indeed, 
\begin{equation}   \label{eq:M_L}
   M = A - D + D -  dd^\t/(\uno^\t d) = 
   L_0 - L , 
\end{equation}
where $L = D-A$ is the 
Laplacian matrix of $G$ and $L_0 = D-  dd^\t /(\uno^\t d)$
can be regarded as the Laplacian matrix of the 
complete graph $G_0 = (V,V\times V,\omega_0)$
where the weight $\omega_0(ij) = d_id_j/\uno^\t d$
is placed on the edge $ij$. 
Thus, in some sense, $G_0$ represents the ``average graph''
in the Chung-Lu model with parameter $d$.

The formula \eqref{eq:M_L}
yields a decomposition of $M$ in terms of two positive semidefinite matrices. A noticeable consequence 
of the Courant-Fischer theorem is the following 
set of inequalities, 
relating algebraic connectivity and modularity of $G$,
and whose simple proof is omitted for brevity:
$$ 
   d_{\min} - a(G) \le a(G_0) - a(G)
   \le m(G) \le d_{\max} - a(G) ,
$$
where $d_{\min}$ and $d_{\max}$ denote the smallest and largest degree of vertices in $G$, respectively. 
Consequently, a necessary condition for $G$ being 
algebraically indivisible is
$a(G_0) \le a(G)$. 
By a result by Fiedler 
\cite{fiedler-connectivity}, whose proof extends immediately
to weighted graphs,  
$a(G) \le [n/(n-1)]d_{\min}$.
Hence, $m(G) \ge -d_{\min}/(n-1)$. 
This lower bound is attained by a clique,
thus it is sharp.

\section{Modularity nodal domains}\label{sec:nodal}

As recalled in Definition \ref{def:strong_nd}
and Definition \ref{def:weak_nd},
any vector $u \in \RR^n$ induces some nodal domains over 
$G$, that is some maximal connected subsets of the vertices $V$ related with sign changes inside $u$. 
Hereafter, we consider nodal domains induced by eigenvectors of the modularity matrix of the graph, which we call \textit{modularity nodal domains}. 
The aim of this section is to derive a nodal domain theorem 
analogous to Theorem \ref{teo:laplacian-strong-domains}
for the modularity nodal domains, contributing to the analysis and the improvement of the spectral-based methods for community detection, proposed by Newman and Girvan \cite{newman-girvan} and well summarized in 
\cite{santo-fortunato} and \cite[Ch.\ 11]{newman-book}. 

We will say that a nodal domain $S\subset V$ 
induced by a vector $u$
is {\em positive} or {\em negative}, according to
the sign of $u$ over $S$.
If $S_1$ and $S_2$ are two nodal domains, we say that $S_1$ is adjacent to $S_2$, in symbols $S_1 \approx S_2$, if there exists $i \in S_1$ and  $j \in S_2$ such that $i \sim j$. The maximality of the nodal domains therefore implies that a necessary condition for $S_1 \approx S_2$ is that  $S_1$ and $S_2$ have different signs. 

Given a real vector $u \neq 0$ the following properties on the  nodal domains it induces are not difficult to be observed;
some of them are borrowed from \cite{nodal-domain-theorem}:
\begin{enumerate}

\item[\textit{P1.}] 
In any nodal domain there exists at least one node where $u$ is nonzero.
Moreover, if $S_1$ and $S_2$ are weak nodal domains such that $S_1 \cap S_2 \neq \emptyset$ then $S_1$ and $S_2$ have opposite sign and $u_i=0$ for any $i \in S_1 \cap S_2$.

\item[\textit{P2.}] Let $A$ be the adjacency matrix of $G$. If $S \subseteq V$ is a (strong or weak)
nodal domain, then  $G(S)$ is connected and the  principal submatrix $A(S)$  is irreducible. Therefore, since two nodal domains of the same sign can not be adjacent, for any vector $u$
there exists a labeling of the vertices of $V$ such that the adjacency matrix $A$ of $G$ has the form 
\begin{equation}\label{PAP}
   A = \begin{pmatrix}
       A_+ & B & C  \\
       B^\t & A_- & D \\
       C^\t & D^\t & A_0 \end{pmatrix}
\end{equation}
where rows and columns of $A_+$, $A_-$, and $A_0$ 
correspond to entries in $u$ that are positive, negative, and zero,
respectively,
and $A_+$ and $A_-$ are the direct sum of overall $s$ irreducible matrices, $s$ being the number of strong nodal domains.

\item[\textit{P3.}] If $S_1$ and $S_2$ are adjacent weak nodal domains, then there exists $i \in S_1$ and $j \in S_2\setminus S_1$ such that $i \sim j$ and $u_j \neq 0$. In fact, if $S_1 \cap S_2 = \emptyset$ then the assertion follows by definition. 
(If $i\sim j$ and $u_j = 0$ then $j\in S_1\cap S_2$.)
Whereas if $S_1 \cap S_2 \neq \emptyset$ then, by 
property \textit{P1},
there must be at least a pair of vertices $i,j \in S_1 \cup S_2$ for which 
$i \in S_1\cap S_2$ (whence $u_i = 0$), $i \sim j$, and $j \in S_2\setminus S_1$
(so that $u_j \ne 0$); otherwise, there would be 
no edge joining $S_1\cap S_2$ and $ S_2\setminus S_1$,
contradicting the hypothesis that $G(S_2)$ is connected. 
\end{enumerate}

The following theorem, which is a slight generalization of 
\cite[Thm.\ 2.1]{fiedler-vector} and 
\cite[Thm.\ 1]{powers-graph-eigenvector}, 
is the key for deriving a nodal domain theorem for the modularity eigenvectors. 
We stress that such theorem and its corollaries hold for any undirected simple graph, that is, loops and weighted edges are possibly allowed.

\begin{theorem}\label{teo:main}
Let $A$ be the adjacency matrix of a simple, connected graph $G$. 
Let $\lambda \in \RR$ and $u \in \RR^n$ be such that 
at least two entries of $u$ have opposite signs
and 
$Au\geq \lambda u$, in the componentwise sense. Let $\ell$ and $\ell'$ be respectively  the number of eigenvalues of $A$ that are greater than or equal to  $\lambda$ and the number of eigenvalues that are strictly greater than $\lambda$, counted with their multiplicity. Then $u$ induces at most $\ell$ positive strong nodal domains 
and at most $\ell'$ positive weak nodal domains.
\end{theorem}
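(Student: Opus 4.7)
The plan is to exhibit, for each case, a subspace of $\RR^n$ of the right dimension on which the quadratic form $v\mapsto v^\t(A-\lambda I)v$ is positive semidefinite (for the strong count) or strictly positive definite (for the weak count), and then invoke the Courant--Fischer minimax principle to conclude. The trial vectors will have pairwise disjoint supports contained in the respective nodal domains, which makes linear independence automatic and, thanks to the non-adjacency of distinct positive nodal domains of the same type, makes the off-diagonal terms of the quadratic form vanish.

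For the strong case I take $v_j = u\cdot\uno_{S_j}$, the restriction of $u$ to $S_j$ extended by zero. Every neighbor $k\notin S_j$ of a node $i\in S_j$ must satisfy $u_k\leq 0$, otherwise $k$ would extend the maximal connected component $S_j$ of $\{u>0\}$; hence $(Av_j)_i = (Au)_i - \sum_{k\notin S_j} a_{ik} u_k \geq (Au)_i \geq \lambda u_i$ for $i\in S_j$, and multiplying by $u_i>0$ and summing gives $v_j^\t A v_j \geq \lambda v_j^\t v_j$. Distinct positive strong nodal domains are non-adjacent (they would otherwise coalesce into a larger connected component of $\{u>0\}$), so $v_j^\t A v_k = 0$ for $j\neq k$, and the inequality extends to every element of $\mathrm{span}\{v_j\}_{j=1}^s$; Courant--Fischer then yields $s\leq\ell$.

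For the weak case the naive choice $u\cdot\uno_{T_j}$ only delivers non-strict inequality (e.g.\ on $P_5$ with $u=(1,1,0,-1,-1)$ and $\lambda=1$, where $Au=u$ but the Rayleigh quotient of $(1,1,0,0,0)$ equals $\lambda$). To obtain the sharper bound I use the Perron--Frobenius eigenvectors: by property \textit{P3} each $A(T_j)$ is irreducible and nonnegative, hence admits a strictly positive Perron eigenvector $w_j$ with eigenvalue $\mu_j$; extending $w_j$ by zero outside $T_j$, we have $w_j^\t A w_j = \mu_j w_j^\t w_j$. Distinct positive weak nodal domains are mutually non-adjacent (any neighbor of $T_j$ outside $T_j$ is strictly negative, hence lies in no other $T_k$), so the off-diagonal terms again vanish. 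The crucial step is proving $\mu_j>\lambda$ strictly. If $u|_{T_j}$ has a zero entry it is not a scalar multiple of the strictly positive $w_j$, so its Rayleigh quotient on $A(T_j)$ is strictly less than $\mu_j$; combined with the bound $u|_{T_j}^\t A(T_j) u|_{T_j} \geq \lambda u|_{T_j}^\t u|_{T_j}$ (derived by the same sign argument as in the strong case) this yields $\mu_j>\lambda$. If instead $u|_{T_j}$ is strictly positive on $T_j$, the Collatz--Wielandt characterization applied to $A(T_j)u|_{T_j}\geq \lambda u|_{T_j}$ gives $\mu_j\geq\lambda$, and equality would force $u|_{T_j}$ to be a Perron eigenvector of $A(T_j)$, hence $\sum_{k\notin T_j} a_{ik} u_k = 0$ for every $i\in T_j$. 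Since every neighbor of $T_j$ outside $T_j$ has $u_k<0$ strictly, this sum can vanish only if $T_j$ has no edges leaving it, contradicting the connectedness of $G$ together with $T_j\subsetneq V$ (which holds because $u$ has entries of opposite signs). Courant--Fischer applied to $\mathrm{span}\{w_j\}_{j=1}^m$ then gives $m\leq\ell'$.

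The main obstacle is proving the strict inequality $\mu_j>\lambda$ in the weak case: the natural trial vectors $u\cdot\uno_{T_j}$ may attain Rayleigh quotient exactly $\lambda$, which forces the switch to Perron eigenvectors of $A(T_j)$ and a careful dispatch of the exceptional situation in which $u|_{T_j}$ is itself strictly positive on $T_j$ and happens to be a Perron eigenvector.
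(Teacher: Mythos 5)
Your argument is correct, and its overall skeleton coincides with the paper's: decouple the positive nodal domains of the same type (they are pairwise non-adjacent, so the corresponding blocks of $A$ interact only through the nonpositive part of $u$), show that each block's Perron root is $\geq\lambda$ (strong case) resp.\ $>\lambda$ (weak case), and transfer this to $A$ by min-max/interlacing. Your strong-domain half is the paper's argument verbatim in trial-vector language. The weak-domain half reaches the same intermediate goal, $\rho(A(T_j))>\lambda$, by a different mechanism: the paper pairs the componentwise inequality $A_ju_j\geq\lambda u_j-B_ju_{w+1}$ against the positive Perron eigenvector $x_j$ of the block and uses property \textit{P4} (each weak domain has a neighbour outside it on which $u$ is strictly negative) to get $x_j^\t B_j u_{w+1}<0$, hence $\rho(A_j)x_j^\t u_j>\lambda x_j^\t u_j$ in one line, uniformly in all cases. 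You instead split according to whether $u$ vanishes somewhere on $T_j$, invoking simplicity of the Perron root of the symmetric block in one case and Collatz--Wielandt rigidity plus connectedness of $G$ in the other; your $P_5$ example correctly identifies why the naive trial vectors fail, and your dichotomy is sound (including the singleton-domain edge case, which lands in your second branch). What the paper's route buys is brevity and uniformity -- no case distinction, and no appeal to simplicity of the Perron root; what yours buys is that the source of strictness (an actual edge leaving $T_j$ carrying a strictly negative value of $u$) is traced back explicitly to the connectedness of $G$ rather than packaged into property \textit{P4}.
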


\begin{proof}
Let $s\geq 1$ be the number of positive strong 
nodal domains induced by $u$. 
Due to property \textit{P2} above, 
we can assume without loss in generality that
the vector $u$ can be partitioned into $s+1$ subvectors,
$u=(u_1,\dots,u_s,u_{s+1})^\t$ such that 
$u_i>0$, for $i=1,\dots,s$,
$u_{s+1}\leq 0$ 
and $A$ is conformally partitioned as
$$
  A = \begin{pmatrix}
  A_1 & & & B_1 \\ 
  & \ddots & & \vdots \\
  & & A_{s} & B_s \\
  B_1^\t & \cdots & B_s^\t & B_{s+1}
  \end{pmatrix},
$$
where $A_i$ are nonnegative and irreducible,
since they are the adjacency matrices of connected graphs. 
By hypothesis, $A_i u_i + B_i u_{s+1} \geq \lambda u_i$
 for $i=1,\dots,s$. Therefore $A_i u_i \geq \lambda u_i - B_i u_{s+1}\geq \lambda u_i$ and, by Perron-Frobenius theorem
 we have 
$$
  \rho(A_i)
   = \max_{x\ne 0} \frac{x^\t A_i x}{x^\t x}
   \geq \frac{u_i^\t A_i u_i}{u_i^\t u_i}\geq \lambda . 
$$
This implies that $A_i$ has at least one eigenvalue
not smaller than $\lambda$, for $i=1,\dots,s$. By 
eigenvalue interlacing inequalities 
\eqref{eq:interlacing2} 
we conclude that $A$ has at least $s$ eigenvalues greater than or equal to $\lambda$, whence $s\leq \ell$. This proves the first inequality in the claim. 
 
The second one can be proved analogously. As for the strong domains, two positive weak nodal domains can not overlap, therefore there exists a labeling of $V$  such that 
$A$ admits the block form
 $$
  A = \begin{pmatrix}
  A_1 & & & B_1 \\ 
  & \ddots & & \vdots \\
  & & A_{w} & B_w \\
  B_1^\t & \cdots & B_w^\t & B_{w+1}
  \end{pmatrix},
$$
where $w$ is the number of weak positive nodal domains,
and the vector $u$ is partitioned conformally as
$u=(u_1,\dots,u_w,u_{w+1})^\t$ where 
$u_i\geq 0$ for $i=1,\dots,w$ and $u_{w+1}\leq 0$.
In fact, the entries in $u_{w+1}$ correspond to nodes belonging to the 
complement of the union of all positive weak nodal domains,
and $u$ may vanish also on some of those nodes.
Nevertheless,
property \textit{P3} above 
imply that each  $B_i$ contains at least one nonzero entry, and $B_iu_-\leq 0$ 
with strict inequality in at least one entry. 
For any fixed $i = 1,\ldots,w$ let $x_i$ be 
a Perron eigenvector of $A_i$, 
$A_ix_i = \rho(A_i)x_i$, with positive entries.
Hence $x_i^\t u_i > 0$ and $x_i^\t B_iu_{w+1} < 0$.
From the inequality 
$A_i u_i \geq \lambda u_i - B_i u_{w+1}$ we obtain
$$
   \rho(A_i) x_i^\t u_i =
   x_i^\t A_i u_i \geq
   \lambda x_i^\t u_i - x_i^\t B_i u_{w+1}
   > \lambda x_i^\t u_i
$$
for $i=1,\dots, w$. Again by the 
eigenvalue interlacing \eqref{eq:interlacing2} 
we see that $A$ has at least $w$ eigenvalues strictly greater that $\lambda$, concluding that $w\leq \ell'$.
\end{proof}

Note that in the preceding theorem $\lambda$ may not be 
an eigenvalue of $A$, in which case $\ell = \ell'$.
If $\lambda$ is an eigenvalue of $A$ then the difference $\ell - \ell'$ equals its algebraic/geometric multiplicity.

\subsection{A modularity nodal domain theorem}

A direct consequence of Theorem \ref{teo:main} 
is the following result concerning the nodal domains
of eigenvectors of modularity matrices, as announced:

\begin{theorem} 
\label{thm:modularity-nodal-theorem}
Let $\lambda$ be an eigenvalue of $M$ and let 
$u$ be an associated eigenvector, 
oriented so that $d^\t u\geq 0$. 
Let $\ell$ and $\ell'$ be respectively  the number of eigenvalues of $M$ which are greater than or equal to  $\lambda$ and the number of eigenvalues which are strictly greater than $\lambda$, counted with their multiplicity. 
If at least two entries of $u$ have opposite signs
then $u$ induces at most $\ell+1$ \emph{positive} strong nodal domains 
and at most $\ell'+1$ \emph{positive} weak nodal domains.
\end{theorem}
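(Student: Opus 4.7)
The plan is to reduce the statement to Theorem \ref{teo:main} applied to the adjacency matrix, and then to translate eigenvalue counts for $A$ into eigenvalue counts for $M$ by means of a rank-one interlacing argument. The loss of exactly one eigenvalue in this last passage is what produces the ``$+1$'' in the bound.

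First, from $Mu=\lambda u$ together with $M = A - dd^\t/\vol G$, I would derive the identity
$$
   Au = \lambda u + \frac{d^\t u}{\vol G}\,d .
$$
Because $d$ has nonnegative entries and the orientation hypothesis $d^\t u \geq 0$ forces the scalar coefficient to be nonnegative, the right-most term is componentwise nonnegative, and hence $Au \geq \lambda u$ in the componentwise sense. Since $u$ has entries of opposite signs by hypothesis, Theorem \ref{teo:main} applies to $A$ and to the value $\lambda$: denoting by $\ell_A$ and $\ell_A'$ respectively the number of eigenvalues of $A$ that are $\geq \lambda$ and $>\lambda$, we get at most $\ell_A$ positive strong nodal domains and at most $\ell_A'$ positive weak nodal domains for $u$.

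The main obstacle is then to compare $\ell_A,\ell_A'$ with $\ell,\ell'$: this is the delicate step, because the matrix $A-M = dd^\t/\vol G$ is positive semidefinite, so the na\"\i ve Weyl bound $\lambda_i(M) \leq \lambda_i(A)$ goes in the ``wrong'' direction for our purposes. However, since $A - M$ has rank one, Weyl's inequalities in fact yield the full interlacing
$$
   \lambda_{i+1}(A) \leq \lambda_i(M) \leq \lambda_i(A), \qquad i=1,\dots,n-1 .
$$
The left-hand inequality is the one I would use: whenever $\lambda_i(A)\geq\lambda$ for some $i\geq 2$, it produces the distinct eigenvalue $\lambda_{i-1}(M)\geq \lambda_i(A)\geq \lambda$. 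This defines an injection of $\{i\geq 2 : \lambda_i(A)\geq \lambda\}$ into $\{j\geq 1 : \lambda_j(M)\geq \lambda\}$ and therefore $\ell_A \leq \ell+1$; the identical argument with strict inequality gives $\ell_A' \leq \ell'+1$.

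Combining the two steps, the number of positive strong nodal domains induced by $u$ is at most $\ell_A \leq \ell+1$ and the number of positive weak nodal domains is at most $\ell_A' \leq \ell'+1$, which is exactly the claim. The proof is short modulo Theorem \ref{teo:main}; the only nontrivial ingredient is the rank-one interlacing for $M$ vs.\ $A$, which simultaneously accounts for the shift of the bound by one unit and for the necessity of fixing the orientation of $u$ through the sign of $d^\t u$.
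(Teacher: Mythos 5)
Your proposal is correct and follows essentially the same route as the paper: derive $Au = \lambda u + [d^\t u/(\vol G)]\,d \geq \lambda u$ from the orientation hypothesis, apply Theorem \ref{teo:main} to $A$, and then use the rank-one Weyl interlacing $\lambda_{i+1}(A) \leq \lambda_i(M) \leq \lambda_i(A)$ to conclude that the eigenvalue counts for $A$ exceed those for $M$ by at most one. Your injection argument for $\ell_A \leq \ell+1$ and $\ell_A' \leq \ell'+1$ is just a slightly more explicit packaging of the paper's inspection of the interlacing chain.
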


\begin{proof}
By hypotheses, $\ell \geq \ell'+1$ and the eigenvalues of $M$ fulfill
$$
   \lambda_{\ell'}(M) > 
   \underbrace{\lambda_{\ell'+1}(M) = \ldots 
   = \lambda_{\ell}(M)}_{\hbox{\scriptsize eigenvalues equal to $\lambda$}}
   > \lambda_{\ell+1}(M) ,
$$
the first (last) inequality being missing if $\ell' = 0$
($\ell = n$, respectively).
Since $A-M$ is a positive semidefinite rank-one matrix,
inequalities \eqref{eq:Weyl}
imply the following interlacing
between the eigenvalues of $A$ and of $M$:
$$
   \lambda_1(A) \geq \lambda_1(M) \geq
   \lambda_2(A) \geq \lambda_2(M) \geq \ldots \geq \lambda_n(M) .
$$
By inspecting the preceding inequalities
we get that 
\begin{itemize}
\item 
$\lambda_{\ell}(A) \geq \lambda > \lambda_{\ell+2}(A)$; 
thus $\ell \leq |\{i : \lambda_i(A) \geq \lambda\}| \leq \ell +1$.
\item 
$\lambda_{\ell'}(A) > \lambda \geq \lambda_{\ell'+2}(A)$; thus $\ell' \leq |\{i : \lambda_i(A) > \lambda\}| \leq \ell' +1$.
\end{itemize}
By hypothesis we have $Au = Mu + [d^\t u/(\vol G)]d \geq \lambda u$. 
The claim follows immediately by Theorem \ref{teo:main}.
\end{proof}

A close inspection of the preceding proof reveals that,
if $\lambda$ is not an eigenvalue of $A$ then 
we must have $\ell = \ell'+1$ and 
the previous inequalities become
$$
   \lambda_{\ell'}(M) >    \lambda_{\ell}(A) > 
   \lambda_{\ell}(M) = \lambda > \lambda_{\ell+1}(A)
   > \lambda_{\ell+1}(M) .
$$
Consequently the bound for the induced positive strong nodal domains in the theorem above becomes simply $\ell$.
The following corollary specializes the content of the preceding
theorem to eigenvectors associated to 
the algebraic modularity:

\begin{corollary}\label{cor:m(G)-domains}
Let $u$ be an eigenvector associated to $m(G)$
and oriented so that $d^\t u \geq 0$.
If $m(G)$ is simple and is not an eigenvalue of $A$
then $u$ induces exactly one positive (strong) nodal domain.
\end{corollary}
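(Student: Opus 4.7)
The plan is to reduce the statement to a direct application of Theorem \ref{thm:modularity-nodal-theorem} combined with the sharpening observation in the paragraph that immediately follows its proof. The first step is to verify the hypotheses. Since $M\uno = 0$, simplicity of $m(G)$ forces $m(G) \neq 0$, and consequently $u$, being an eigenvector of $M$ associated to an eigenvalue distinct from the one of $\uno$, satisfies $\uno^\t u = 0$. This orthogonality forces $u$ to have at least one strictly positive and at least one strictly negative component, so the ``opposite signs'' hypothesis of Theorem \ref{thm:modularity-nodal-theorem} is met. The orientation condition $d^\t u \geq 0$ moreover guarantees that the bound the theorem provides applies to the \emph{positive} side.

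Next, I will count $\ell$, the number of eigenvalues of $M$ not smaller than $m(G)$. The interesting regime is $m(G) > 0$ (the complementary case being that of an algebraically indivisible graph); then Remark \ref{rem:m(G)} identifies $m(G) = \lambda_1(M)$, and simplicity gives $\ell = 1$. Because $m(G)$ is assumed not to lie in the spectrum of $A$, the observation following Theorem \ref{thm:modularity-nodal-theorem} sharpens the generic upper bound from $\ell+1$ down to $\ell$. Hence $u$ induces at most one positive strong nodal domain.

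The matching lower bound is immediate: since $\uno^\t u = 0$ and $u \neq 0$, some component $u_i$ is strictly positive, and the maximal connected subgraph of $G$ supported in $\{j : u_j > 0\}$ that contains $i$ is by definition a positive strong nodal domain. Putting at most one together with at least one gives exactly one. I do not foresee any genuine obstacle in this argument; the only delicate point is bookkeeping, namely the switch from $\ell + 1$ to $\ell$ granted by the assumption that $m(G)$ is not an eigenvalue of $A$, together with the use of Remark \ref{rem:m(G)} to read off $\ell = 1$ from the simplicity hypothesis.
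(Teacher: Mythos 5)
Your argument is correct and follows the paper's own proof essentially verbatim: dispose of the degenerate case $m(G)=0$ using simplicity, observe that $\uno^\t u = 0$ forces entries of both signs, and invoke Theorem \ref{thm:modularity-nodal-theorem} together with the sharpened bound $\ell$ (valid because $m(G)$ is not an eigenvalue of $A$), plus the trivial lower bound of one. The only caveat is that your computation $\ell=1$ requires $m(G)=\lambda_1(M)$, i.e.\ $m(G)>0$, and you set aside the algebraically indivisible case $m(G)<0$ where $0$ is also an eigenvalue of $M$ exceeding $m(G)$ and hence $\ell\ge 2$ --- but the paper's own proof glosses over exactly the same point, so this is not a defect relative to it.
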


\begin{proof}
It suffices to observe that, if $m(G) = 0$ then
$u$ must be a multiple of $\uno$, and the claim is trivial.
On the other hand, if $m(G) \neq 0$ then $u^\t\uno = 0$,
so $u$ has at least two entries with different signs,
and the claim follows from 
the aforementioned interlacing inequalities and 
Theorem \ref{thm:modularity-nodal-theorem}.
\end{proof}

Unfortunately there exists no analogous of 
Theorem \ref{thm:modularity-nodal-theorem} 
for the negative nodal domains. This is illustrated by the following example.

\begin{example}
We produce a family of graphs, of arbitrarily large size,
to show that the number of
(unsigned) nodal domains induced by the leading eigenvector
of $M$ can be arbitrary, whilst there is exactly one
positive nodal domain, if signs are chosen 
as prescribed by Theorem \ref{thm:modularity-nodal-theorem}.
Consider a weighted star graph with loops 
on $n = m+1$ nodes, whose structure and adjacency matrix
are as follows:
$$
\begin{grafo}{4mm}
   (0,1)\NodoO[4mm]{1}{\scriptstyle 1}{R}{},
   (2,1)\Nodo[4mm]{2}{\ldots}{L}{},
   (4,1)\NodoO[4mm]{3}{\scriptstyle m}{R}{},
   (2,-1)\NodoO[4mm]{4}{\scriptstyle n}{R}{},
   \arcoD[]{4}{1}
   \arcoD[]{4}{3}
   \loopU[_{\alpha}]{1}
   \loopU[_{\alpha}]{3}
   \loopD[^{\beta}]{4}
\end{grafo}
\qquad\qquad
   A = \begin{pmatrix}
   \alpha & & & 1 \\ & \ddots & & \vdots \\
   & & \alpha & 1 \\ 1 & \cdots & 1 & \beta \end{pmatrix} .
$$
Hence, $\alpha$ and $\beta$ are the weights 
of the loops placed on 
the leaf nodes $1,\ldots,m$ and on the root node
$n = m+1$, respectively. 
In particular, the degree vector is 
$d = A\uno = (1+\alpha, \ldots, 1+\alpha, \beta + m)^\t$
and the volume is $\vol G = d^\t\uno = m(2+\alpha) + \beta$.
Straightforward computations leads to 
the conclusion that the eigenvalues of the modularity matrix $M$
are the following:
\begin{itemize}
\item $0$, with associated eigenvector $\uno$;

\item $\alpha$, with multiplicity $m-1$ and associated
eigenvectors $\uno_{\{1\}}-\uno_{\{j\}}$ for $j = 2,\ldots,m$;

\item $\bar \lambda = (\alpha\beta - m)(m+1)/\vol G$,
with associated eigenvector $\bar v = (-1,\ldots,-1,m)^\t$.
\end{itemize}
Observe that, when 
$$ 
   \alpha\beta - m > (\alpha+1)^2
$$ 
then $\bar \lambda$ is positive and dominant.
If in addition 
$\alpha + 1 \leq \beta + m$
then the vector $\bar v$ fulfills the inequality 
$d^\t\bar v \geq 0$, whence
the spectral clustering of the graph consists
of one positive nodal domain, given by node $n$,
and $m$ distinct, negative nodal domains,
given by the leaf nodes.
Both inequalities above are fulfilled when, for example, 
$\alpha = 1$ and $\beta = m+5$.

On the other hand, the Laplacian matrix of the same
graph is 
$$
   L = \begin{pmatrix}
   1 & & & -1 \\ & \ddots & & \vdots \\
   & & 1 & -1 \\ -1 & \cdots & -1 & m \end{pmatrix} ,
$$
independently on $\alpha$ and $\beta$.
Its smallest nonzero eigenvalue is $1$ and the associated eigenspace is set of all zero-sum vectors
that are orthogonal to 
the $n$-vector $(1,\ldots,1,0)$. Hence,
any spectral partitioning induced by a Fiedler vector
has exactly two weak nodal domains (which intersect
at the root node), whereas the number of (positive and negative)
strong nodal domains
can vary in the range $2,\ldots,m$.
\end{example}

Analogous examples can be built up using loopless,
unweighted graphs. Indeed, consider a
graph with $p+mq$ nodes
consisting of one clique with $p$ nodes
and $m$ copies of the clique with $q$ nodes.
Moreover, add $m$ edges connecting a fixed node of the
former subgraph with one node of each of the latter
subgraphs. The case with $p = 4$, $m = q = 3$
is shown hereafter:
$$
\begin{grafo}{10mm} 
( 0.41 ,  0.57)\NodoO[1.5mm]{1}{}{L}{}, 
( 0.02 ,  0.39)\NodoO[1.5mm]{2}{}{L}{}, 
( 0.22 ,  0.34)\NodoO[1.5mm]{3}{}{L}{}, 
( 0.11 ,  0.58)\NodoO[1.5mm]{4}{}{L}{}, 
( 0.91 ,  0.77)\NodoO[1.5mm]{5}{}{L}{}, 
( 1.19 ,  1.00)\NodoO[1.5mm]{6}{}{L}{}, 
( 1.27 ,  0.78)\NodoO[1.5mm]{7}{}{L}{}, 
( 0.68 ,  0.08)\NodoO[1.5mm]{8}{}{L}{}, 
( 0.95 , -0.14)\NodoO[1.5mm]{9}{}{L}{}, 
( 0.76 , -0.27)\NodoO[1.5mm]{10}{}{L}{}, 
( 0.17 ,  1.10)\NodoO[1.5mm]{11}{}{L}{}, 
( 0.13 ,  1.45)\NodoO[1.5mm]{12}{}{L}{}, 
(-0.08 ,  1.36)\NodoO[1.5mm]{13}{}{L}{}, 
\arcoD[]{2}{1} 
\arcoD[]{3}{1} 
\arcoD[]{4}{1} 
\arcoD[]{5}{1} 
\arcoD[]{8}{1} 
\arcoD[]{11}{1} 
\arcoD[]{1}{2} 
\arcoD[]{3}{2} 
\arcoD[]{4}{2} 
\arcoD[]{1}{3} 
\arcoD[]{2}{3} 
\arcoD[]{4}{3} 
\arcoD[]{1}{4} 
\arcoD[]{2}{4} 
\arcoD[]{3}{4} 
\arcoD[]{1}{5} 
\arcoD[]{6}{5} 
\arcoD[]{7}{5} 
\arcoD[]{5}{6} 
\arcoD[]{7}{6} 
\arcoD[]{5}{7} 
\arcoD[]{6}{7} 
\arcoD[]{1}{8} 
\arcoD[]{9}{8} 
\arcoD[]{10}{8} 
\arcoD[]{8}{9} 
\arcoD[]{10}{9} 
\arcoD[]{8}{10} 
\arcoD[]{9}{10} 
\arcoD[]{1}{11} 
\arcoD[]{12}{11} 
\arcoD[]{13}{11} 
\arcoD[]{11}{12} 
\arcoD[]{13}{12} 
\arcoD[]{11}{13} 
\arcoD[]{12}{13} 
\end{grafo} 
$$
Under appropriate conditions on 
the parameters $p$, $q$, and $m$ the leading eigenvector of $M$ splits the graph
into the $m+1$ cliques, each belonging to
a different nodal domain; the (unique) positive nodal domain
being the clique having order $p$.
Computer experiments show that those conditions are met e.g.,
for $p = 4$, $q = 3$, and $m = 2,\ldots,11$.

\section{Upper bounds on the graph modularity}

In the preceding sections we have understood modularity
as a functional defined over arbitrary subsets of $V$. 
For the purposes of community detection problems, it is convenient to extend the previous definition to arbitrary partitions. In fact,
Newman and Girvan original definition of the modularity of 
a partition $\matheul{P} = \{S_1,\ldots,S_k\}$ of $V$, 
see Equation (5) in \cite{newman-girvan}, 
can be expressed in our notations as
\begin{equation}  \label{eq:defq(P)}
   q(\matheul{P}) = 
   \frac{1}{\vol G} \sum_{i=1}^k Q(S_i) =
   \frac{1}{\vol G} \sum_{i=1}^k \uno_{S_i}^\t M\uno_{S_i} .
\end{equation}
The normalization factor $1/\vol G$ is purely conventional and has been included by the authors for compatibility with previous works, 
to settle the value of $q(\matheul{P})$ in a range independent of $G$.
That definition has been introduced as a merit function to quantify the strength of the
community structure defined by $\matheul{P}$.
In the earliest 
community detection algorithm, the function $q(\matheul{P})$
is optimized by a hierarchical clustering method.
Subsequent improvements of that algorithm
maintain essentially the original approach, see \cite{Louvain_method}.
The use (and the definition itself) of the modularity matrix 
to compute the modularity of a paritioning 
has been introduced successively
in \cite{newman-eigenvectors,newman-modularity}.

As recalled in the Introduction, in the community detection problem
one has no preliminary indications on the number and size of possible communities inside $G$. Hence, 
it is natural to introduce 
the number 
$$
   q_G = 
   \max_{\matheul{P}} q(\matheul{P}) ,
$$
where the maximum is taken over all nontrivial partitions of $V$,
and try to bound it in terms of spectral properties of $M$ only.

\begin{remark}   \label{rem:5.1}
An optimal partition $\matheul P_\ast = \{S_1, \dots, S_k\}$, 
that is, 
a partition such that $q_G = q(\matheul{P}_\ast)$,
has the property that if any two subsets are merged then the overall modularity does not increase. This does not imply that $Q(S_i) > 0$ for all $i=1,\ldots,k$,
even if $G$ is divisible. Nevertheless,
if $Q(S) > 0$ for some $S\subset V$ then 
$q_G \geq q(\{S,\bar S\}) > 0$, so that the condition 
$q_G > 0$ is equivalent to say that $G$ is divisible.
\end{remark}

\medskip
In the $k=2$ case we have
$\matheul{P} = \{S,\bar S\}$ for some $S\subset V$.
Since $Q(S) = Q(\bar S)$, for notational simplicity, we can write $q(S)$
in place of $q(\matheul{P})$. Correspondingly, we also consider
the quantity
$$
   q_G' = 
   \max_{S\subset V} q(S) ,
$$
whose computation corresponds to the identification of a 
set $S$ or, 
equivalently, a cut $\{S,\bar S\}$
with maximal modularity.
We prove hereafter a very general upper bound for $q_G'$
in terms of $m(G)$; a lower bound is considered in the 
forthcoming Corollary \ref{cor:cheeger}, 
under additional hypotheses.

\begin{theorem}   \label{thm:qprimeG}
Let $\langle d \rangle = \vol G/n$ be the average degree 
in $G$. Then,
$$
   q_G' \leq \frac{m(G)}{2\,\langle d \rangle} .
$$
\end{theorem}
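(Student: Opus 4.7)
The plan is to reduce the statement to Lemma \ref{lem:g_G'}, which already provides an upper bound for the modularity of a single subset $S$ in terms of $m(G)$, combined with a trivial bound on $|S|\,|\bar S|$.

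First I would unfold the definition of $q(\{S,\bar S\})$ from \eqref{eq:defq(P)}. Since $Q(S) = Q(\bar S)$ as noted just after \eqref{modularity-3}, we have
$$
   q(S) = q(\{S,\bar S\}) = \frac{Q(S) + Q(\bar S)}{\vol G} = \frac{2\,Q(S)}{\vol G} ,
$$
so that $q_G' = \max_{S\subset V} 2Q(S)/\vol G$. It therefore suffices to prove $Q(S) \leq m(G)\,n/4$ for every nontrivial $S\subset V$.

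Next I would apply Lemma \ref{lem:g_G'}, which gives $Q(S) \leq m(G)\,|S|\,|\bar S|/n$. Since $|S| + |\bar S| = n$, the AM--GM inequality yields $|S|\,|\bar S| \leq n^2/4$, whence
$$
   Q(S) \leq m(G)\,\frac{|S|\,|\bar S|}{n} \leq \frac{m(G)\,n}{4} .
$$
Multiplying by $2/\vol G$ and recalling that $\langle d\rangle = \vol G/n$, one obtains
$$
   q(S) \leq \frac{m(G)\,n}{2\,\vol G} = \frac{m(G)}{2\,\langle d\rangle} ,
$$
and taking the maximum over $S\subset V$ completes the argument.

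There is no real obstacle here: the content of the result is essentially a repackaging of Lemma \ref{lem:g_G'} with the universal bound $|S|\,|\bar S| \leq n^2/4$ and the conversion factor between $Q$ and $q$. The only subtlety worth flagging is that Lemma \ref{lem:g_G'} does not require $m(G) \geq 0$, so the bound is vacuous in the algebraically indivisible case (consistent with the fact that $q_G'$ can then be non-positive).
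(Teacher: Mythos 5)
Your proof is correct and follows essentially the same route as the paper's: rewrite $q(S)=2Q(S)/\vol G$, invoke Lemma \ref{lem:g_G'}, and bound $|S|\,|\bar S|\leq n^2/4$. Your closing remark about the sign of $m(G)$ is a fair observation (the last inequality implicitly uses $m(G)\geq 0$), but the paper's own proof makes the identical tacit assumption, so there is no divergence to report.
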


\begin{proof}
Since $Q(S) = Q(\bar S)$, for any $S\subseteq V$ we have
by definition
\begin{equation}   \label{eq:q(S)}
q(S) = \frac{1}{\vol G} \big( Q(S) + Q(\bar S) \big)
=\frac{2\, Q(S)}{\vol G} .
\end{equation}
Let $S$ be a set maximizing $q(S)$. 
From Lemma \ref{lem:g_G'} we obtain
$$
   q_G' = \frac{2\, Q(S)}{\vol G}
   \leq \frac{2\, m(G)}{\vol G} \frac{|S||\bar S|}{n} 
   \leq \frac{m(G)}{\vol G} \frac{n}{2} ,
$$
since $|S||\bar S|$ is upper bounded by $n^2/4$
for any $S$. 
\end{proof}

\medskip

In what follows, we prove a result analogous to the preceding theorem
but for the number $q_G$.
For clarity of exposition, we derive firstly a preliminary result:

\begin{lemma}   \label{lem:bhatia}
Let $A$ and $B$ be two symmetric matrices of order $n$,
with eigenvalues $\lambda_1(A) \ge \ldots \ge\lambda_n(A)$
and $\lambda_1(B) \ge \ldots \ge\lambda_n(B)$, respectively.
Then,
$$
\mathrm{trace}(AB) \le \sum_{i=1}^n \lambda_i(A)\lambda_i(B) .
$$
\end{lemma}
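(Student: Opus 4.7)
The plan is to reduce the claim to an optimization over doubly stochastic matrices via the spectral theorem, and then invoke Birkhoff's theorem together with the rearrangement inequality. This is essentially von Neumann's trace inequality restricted to the symmetric case, so the proof should follow the classical template.

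First I would write spectral decompositions $A = U D_A U^{\t}$ and $B = V D_B V^{\t}$ with $U, V$ orthogonal and $D_A = \mathrm{Diag}(\lambda_1(A),\ldots,\lambda_n(A))$, $D_B = \mathrm{Diag}(\lambda_1(B),\ldots,\lambda_n(B))$. Setting $P = U^{\t} V$, which is also orthogonal, the cyclic invariance of the trace gives
$$
\mathrm{trace}(AB) = \mathrm{trace}(D_A\, P\, D_B\, P^{\t}) = \sum_{i,j=1}^n \lambda_i(A)\,\lambda_j(B)\, p_{ij}^{2}.
$$
Now let $S$ be the matrix with entries $s_{ij} = p_{ij}^{2}$. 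Since $P$ is orthogonal, all row and column sums of $S$ equal $1$, so $S$ is doubly stochastic. Hence
$$
\mathrm{trace}(AB) \;\leq\; \max_{S \in \Omega_n} \sum_{i,j=1}^n \lambda_i(A)\,\lambda_j(B)\, s_{ij},
$$
where $\Omega_n$ denotes the set of $n\times n$ doubly stochastic matrices.

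By Birkhoff's theorem, $\Omega_n$ is the convex hull of the $n!$ permutation matrices, and since the right-hand side is a linear functional of $S$, its maximum over $\Omega_n$ is attained at an extreme point. Therefore
$$
\mathrm{trace}(AB) \;\leq\; \max_{\sigma} \sum_{i=1}^n \lambda_i(A)\,\lambda_{\sigma(i)}(B),
$$
the maximum ranging over all permutations $\sigma$ of $\{1,\ldots,n\}$. The rearrangement inequality, applied to the two non-increasing sequences $\lambda_1(A)\geq\cdots\geq\lambda_n(A)$ and $\lambda_1(B)\geq\cdots\geq\lambda_n(B)$, ensures that this maximum is attained by the identity permutation, yielding $\sum_{i=1}^n \lambda_i(A)\lambda_i(B)$ and proving the claim.

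The only nontrivial point is observing that the Hadamard square of an orthogonal matrix is doubly stochastic; once that is in hand, the combination of Birkhoff's theorem and the rearrangement inequality is routine. As an alternative, the statement is a direct consequence of von Neumann's trace inequality for Hermitian matrices, cited from Bhatia's monograph already referenced in the paper, so if concision is preferred the proof could be shortened to a one-line citation.
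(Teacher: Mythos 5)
Your proof is correct, but it takes a genuinely different route from the paper. The paper deduces the lemma in two lines from the Hoffman--Wielandt inequality $\sum_{i=1}^n (\lambda_i(A)-\lambda_i(B))^2 \le \|A-B\|_F^2$ (cited from Bhatia), by expanding $\|A-B\|_F^2 = \|A\|_F^2 + \|B\|_F^2 - 2\,\mathrm{trace}(AB)$ and using $\|A\|_F^2 = \sum_i \lambda_i(A)^2$; the whole trace inequality then drops out by rearranging terms. You instead prove the statement essentially from first principles: diagonalize both matrices, observe that $\mathrm{trace}(AB) = \sum_{i,j}\lambda_i(A)\lambda_j(B)p_{ij}^2$ with $(p_{ij}^2)$ doubly stochastic, and conclude via Birkhoff's theorem and the rearrangement inequality. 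Every step checks out, including the two points one must be careful about --- that the Hadamard square of an orthogonal matrix is doubly stochastic, and that the rearrangement inequality applies to arbitrary (not necessarily nonnegative) real sequences sorted in the same order. The trade-off: the paper's argument is shorter given that Hoffman--Wielandt is taken off the shelf, whereas yours is self-contained modulo Birkhoff and, as a bonus, yields the companion lower bound $\mathrm{trace}(AB) \ge \sum_{i=1}^n \lambda_i(A)\lambda_{n+1-i}(B)$ by taking the minimizing permutation instead. It is worth noting that the standard proof of Hoffman--Wielandt itself runs through exactly the Birkhoff argument you give, so the two proofs are cousins rather than strangers; your closing remark that one could simply cite von Neumann's trace inequality from Bhatia is also a legitimate shortcut, in the same spirit as the paper's citation of a different theorem from the same monograph.
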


\begin{proof}
The claim can be derived easily from the 
Hoffman-Wielandt inequality \cite[Thm.\ VI.4.1]{bhatia}
$$
   \sum_{i=1}^n (\lambda_i(A) - \lambda_i(B))^2 
   \leq \|A-B\|_F^2 ,
$$
using the expansion $\|A-B\|_F^2 = \|A\|_F^2 +
\|B\|_F^2 - 2\,\mathrm{trace}(AB)$ and the equality
$\|A\|_F^2 = \sum_{i=1}^n \lambda_i(A)^2$.
\end{proof}

Consider an arbitrary partition $\matheul P=\{S_1, \dots, S_k\}$ of the node set $V$.  
Assume for simplicity that  
$|S_i|\geq |S_{i+1}|$ for $i=1, \dots, k-1$. 
Introduce the $n\times k$ ``index matrix'' 
$Z = [\uno_{S_1} \cdots \uno_{S_k}]$ and
define $B=ZZ^\t =\sum_{i=1}^k \uno_{S_i}\uno_{S_i}^\t$. Then $B$ has rank $k$ and the cardinalities $|S_i|$ are its nonzero eigenvalues in nonincreasing order.
Recalling that, for arbitrary matrices $A$ and $B$ 
it holds $\mathrm{trace}(AB) = \mathrm{trace}(BA)$, 
\eqref{eq:defq(P)} can be rewritten as follows:
$$ 
   q(\matheul P) = 
   \frac{1}{\vol G}\text{trace}(Z^\t MZ) = 
   \frac{1}{\vol G}\text{trace}(B M) .
$$ 
With the help of Lemma \ref{lem:bhatia}
we immediately get
$$ 
   q(\matheul P) 
   \leq \frac{1}{\vol G}\sum_{i=1}^k
   |S_i|\lambda_i(M)\leq \frac{n}{\vol G} \lambda_1(M) .
$$ 
The latter bound, which does not depend on $\matheul{P}$, 
can be improved as follows:

\begin{theorem}
For any graph $G$,
$$
   q_G \leq \frac{n-1}{\vol G }m(G) .
$$
\end{theorem}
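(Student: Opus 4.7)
The plan is to refine the trace argument from the paragraph preceding the theorem, replacing the bound $n\,\lambda_1(M)/\vol G$ with $(n-1)\,m(G)/\vol G$. Fix an arbitrary partition $\matheul P = \{S_1,\ldots,S_k\}$ with index matrix $Z = [\uno_{S_1}\,\cdots\,\uno_{S_k}]$ and $B = ZZ^\t$, so that $q(\matheul P) = \mathrm{trace}(BM)/\vol G$. The crucial observation is that $M\uno = 0$: this effectively removes one dimension from the spectral count, yielding $n-1$ in place of $n$, and allows the dominant eigenvalue of $M$ appearing in the estimate to be replaced by $m(G) = \max_{x \perp \uno} x^\t M x/x^\t x$.

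To implement the idea I would diagonalize $M$ in an orthonormal eigenbasis $v_1 = \uno/\sqrt n,\,v_2,\ldots,v_n$ with eigenvalues $\mu_1 = 0$ and $\mu_i \leq m(G)$ for $i \geq 2$, by the definition of $m(G)$ in \eqref{eq:def_m(G)}. Expanding the trace,
\[
   \mathrm{trace}(BM) = \sum_{i=1}^n \mu_i\, v_i^\t B v_i = \sum_{i=2}^n \mu_i\, v_i^\t B v_i ,
\]
the $i = 1$ term vanishing. Each coefficient $v_i^\t B v_i = \|Z^\t v_i\|^2$ is nonnegative, so in the regime of interest $m(G) \geq 0$ one concludes $\mathrm{trace}(BM) \leq m(G) \sum_{i=2}^n v_i^\t B v_i$.

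The residual sum is computed via Parseval: $\sum_{i=2}^n v_i^\t B v_i = \mathrm{trace}(B) - v_1^\t B v_1 = n - (1/n)\sum_i |S_i|^2$. Because each $|S_i| \geq 1$ in a partition, $\sum_i |S_i|^2 \geq \sum_i |S_i| = n$, whence this quantity is at most $n-1$. Combining the inequalities and dividing by $\vol G$ gives $q(\matheul P) \leq (n-1) m(G)/\vol G$, independent of $\matheul P$; passing to the maximum concludes the proof. The main subtlety is the sign hypothesis $m(G) \geq 0$ used to preserve the inequality direction when multiplying $\mu_i \leq m(G)$ by the nonnegative weight $v_i^\t B v_i$; this is the interesting regime (Remark~\ref{rem:m(G)}). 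An equivalent, coordinate-free route is to apply Lemma~\ref{lem:bhatia} to $\tilde B = PBP$ and $M$, where $P = I - \uno\uno^\t/n$: since $PMP = M$, the trace is unchanged, and $\tilde B$ inherits $\uno$ as a zero eigenvector, so the eigenvalue pairing produces the same estimate.
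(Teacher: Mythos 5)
Your proof is correct and follows essentially the same route as the paper's: both arguments deflate the direction $\uno$ (you via the orthonormal eigenbasis $v_1=\uno/\sqrt{n},v_2,\ldots,v_n$, the paper via the projector $W=I-\frac{1}{n}\uno\uno^\t$ together with Lemma~\ref{lem:bhatia}), bound the remaining Rayleigh quotients of $M$ by $m(G)$ using \eqref{eq:def_m(G)}, and then use $\mathrm{trace}(B)-\frac{1}{n}\sum_i|S_i|^2\le n-1$, which is exactly the paper's estimate $\mathrm{trace}(Z^\t WZ)\le n-1$. Your primary route is marginally more elementary, since the exact expansion $\mathrm{trace}(BM)=\sum_i\mu_i\,v_i^\t Bv_i$ with nonnegative weights $v_i^\t Bv_i=\|Z^\t v_i\|^2$ makes the Hoffman--Wielandt-based Lemma~\ref{lem:bhatia} unnecessary; your ``coordinate-free'' variant with $PBP$ is precisely the paper's argument. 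One substantive remark: the sign hypothesis $m(G)\ge 0$ that you flag is genuinely needed in the final step (multiplying $\sum_{i\ge 2}v_i^\t Bv_i\le n-1$ by $m(G)$), and the paper's proof silently makes the same assumption when it ``collects terms''; indeed for $m(G)<0$ the stated inequality can fail (for $K_3$ one has $m(G)=-1$, $(n-1)m(G)/\vol G=-1/3$, yet the partition $\{\{1,2\},\{3\}\}$ gives $q_G=-2/9>-1/3$), so your explicit caveat is an improvement in precision over the paper's own proof.
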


\begin{proof}
Let $\matheul{V} = \langle\uno\rangle^\perp$ 
and let $V$ be any 
matrix whose columns 
form an orthonormal basis of $\matheul{V}$.
Observe that $W = VV^\t$ is the orthogonal projector onto 
$\matheul{V}$, that is, $W = I - \frac{1}{n}\uno\uno^\t$. 
Moreover, $\lambda_1(V^\t MV) = m(G)$ by \eqref{eq:def_m(G)}.

Let $\matheul P_\ast=\{S_1, \dots, S_k\}$ 
be an optimal partition of $G$, that is 
$q_G = q(\matheul P_\ast)$, and let again
$Z = [\uno_{S_1} \cdots \uno_{S_k}]$.
Since $M = WMW$, 
Lemma \ref{lem:bhatia} leads us to
\begin{align*}
   \text{trace}(Z^\t MZ) & = \text{trace}(Z^\t WMWZ)
   = \text{trace}((V^\t ZZ^\t V)(V^\t MV)) \\  
   & \leq \sum_{i=1}^n
   \lambda_i(V^\t ZZ^\t V)\lambda_i(V^\t MV) \\
   & \leq \mathrm{trace}(V^\t ZZ^\t V)\, m(G) 
   = \mathrm{trace}(Z^\t W Z)\, m(G) .
\end{align*}
From $W = I - \frac{1}{n}\uno\uno^\t$, 
letting $z = Z^\t \uno = (|S_1|,\ldots,|S_k|)^\t$
we obtain
$$
   Z^\t W Z = Z^\t Z - \frac{1}{n}Z^\t \uno\uno^\t Z
   = \mathrm{Diag}(|S_1|,\ldots,|S_k|) - \frac{1}{n}zz^\t .
$$
Owing to the fact that $\|z\|_2 \geq \|z\|_1/\sqrt{n} = \sqrt{n}$,
we have $\mathrm{trace}(Z^\t W Z) = n - \|z\|_2^2/n \leq n-1$.
It suffices to collect terms,
and the proof is complete.
\end{proof}

%
%
%

\section{How many modules?}   \label{sec:6} 

Based on rather informal arguments, Newman
claims in \cite[Sect.\ B]{newman-eigenvectors}
that the number of positive eigenvalues of $M$
is related to the number of communities recognizable
in the graph $G$,
tightening the connection
between spectral properties of $M$ and the community structure 
of the network it describes. More precisely,
the author argues that the number of positive eigenvalues, 
plus $1$,
is an upper bound on the number of communities
that can be recognized in $G$.
In this section we prove 
various results supporting that conclusion,
that culminate in the subsequent Corollary \ref{cor:best_P}.

Already in Remark \ref{rem:m(G)} we noticed that the existence
of a subgraph $S$ having positive modularity implies
that $M$ has at least one positive eigenvalue.
By the way, if $Q(S)>0$ then also $Q(\bar S)>0$,
therefore two modules (according to Definition \ref{def:community}) give rise to one positive eigenvalue.
The forthcoming theorem proves that, if $G$
has $k$ subgraphs that are
well separated and sufficiently rich in internal edges,
then $M$ has at least $k-1$ positive eigenvalues.

\begin{theorem}   \label{teo:communities1}
Let $S_1,\ldots,S_k$ be pairwise disjoint subsets of $V$,
with $k \ge 1$,
such that $\vol(S_i) \le \frac12\vol G$ and 
$e_{\mathrm{in}}(S_i) > e_{\mathrm{out}}(S_i)$.
Then, $Q(S_i) > 0$ and
$M$ has at least $k-1$ positive eigenvalues.
\end{theorem}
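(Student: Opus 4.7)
The plan is to handle the two conclusions independently. For the positivity of $Q(S_i)$, I would exploit formula \eqref{modularity-3}: since $\vol S_i \le \tfrac12\vol G$ forces $\vol \bar S_i \ge \tfrac12\vol G$, the identity $\vol S_i = 2|E(S_i)|+|\partial S_i|$ gives
$$
Q(S_i) \;=\; \frac{\vol S_i\cdot\vol \bar S_i}{\vol G} - |\partial S_i| \;\ge\; \frac{\vol S_i}{2} - |\partial S_i| \;=\; \frac{2|E(S_i)|-|\partial S_i|}{2} \;>\; 0,
$$
which settles the first claim.

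For the spectral part my plan is a compression argument built from the characteristic vectors of the $S_i$. Let $Z\in\RR^{n\times k}$ be the matrix with $i$th column $\uno_{S_i}/\sqrt{|S_i|}$; pairwise disjointness of the $S_i$ guarantees $Z^\t Z=I_k$, so Cauchy's interlacing theorem yields $\lambda_i(M)\ge\lambda_i(R)$ for $i=1,\dots,k$, where $R=Z^\t MZ$ is the $k\times k$ compression. It therefore suffices to prove that $R$ has at least $k-1$ strictly positive eigenvalues. I would then split $R = R_A - uu^\t$ with $R_A = Z^\t AZ$ and $u = Z^\t d/\sqrt{\vol G}$, exhibiting $R$ as a rank-one positive semidefinite deflation of $R_A$; the standard interlacing for rank-one perturbations gives $\lambda_{k-1}(R)\ge\lambda_k(R_A)$, reducing the whole task to the positive definiteness of $R_A$.

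The graph hypothesis enters precisely in checking that $R_A\succ 0$. I would pass to the congruent matrix $\tilde R_A = D^{1/2}R_A D^{1/2}$ with $D=\mathrm{Diag}(|S_1|,\dots,|S_k|)$, whose entries are the unnormalized quantities $\uno_{S_i}^\t A\uno_{S_j}$: the diagonal entries equal $2|E(S_i)|$ while the off-diagonal entries equal $e(S_i,S_j)\ge 0$, the number of edges joining $S_i$ and $S_j$. Pairwise disjointness of the $S_i$ yields $\sum_{j\ne i} e(S_i,S_j)\le|\partial S_i|$ (strict if $\bigcup_h S_h\neq V$), and combined with the hypothesis $2|E(S_i)|>|\partial S_i|$ this is exactly strict row-wise diagonal dominance of $\tilde R_A$ with positive diagonal. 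Gershgorin's disk theorem then gives $\tilde R_A\succ 0$, hence $R_A\succ 0$, and chaining all the interlacing inequalities we conclude $\lambda_{k-1}(M)\ge\lambda_{k-1}(R)\ge\lambda_k(R_A)>0$.

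The main obstacle is the choice of test subspace: the positivity of the diagonal entries $Q(S_i)/|S_i|$ of $Z^\t MZ$ is in itself too weak to force $k-1$ positive eigenvalues, since the off-diagonal entries of the compression carry no definite sign. The decisive step is to isolate the rank-one null-model correction at the compression level so that the structural inequality $2|E(S_i)|>|\partial S_i|$ can be invoked on the purely combinatorial adjacency block $R_A$; loops, if admitted, only add nonnegative contributions to the diagonal of $\tilde R_A$ and leave the argument unaffected.
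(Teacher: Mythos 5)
Your proposal is correct and follows essentially the same route as the paper: both arguments hinge on showing that the compressed adjacency matrix $Z^\t AZ$ (with entries $2|E(S_i)|$ on the diagonal and $|E(S_i,S_j)|$ off it) is strictly diagonally dominant, hence positive definite, and then combine Cauchy interlacing with the Weyl inequality for the rank-one term $dd^\t/\vol G$. The only cosmetic difference is the order of operations --- the paper first deduces that $A$ has $k$ positive eigenvalues and then applies $\lambda_{k-1}(M)\ge\lambda_k(A)$ at the level of the full matrices, whereas you deflate the rank-one term inside the $k\times k$ compression --- and your derivation of $Q(S_i)>0$ from \eqref{modularity-3} is an equivalent rearrangement of the paper's computation.
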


\begin{proof}
Firstly we observe that, owing to the stated hypotheses,
the sets $S_1,\ldots, S_k$ have positive modularity.
Indeed, for $i = 1,\ldots, k$ we have
$\vol G \le 2 \, \vol \overline{S_i}$, whence
\begin{align*}
   Q(S_i) \frac{\vol G}{\vol \overline{S_i}} & = 
   \vol S_i - e_{\mathrm{out}}(S_i) \frac{\vol G}{\vol \overline{S_i}} \\
   & = e_{\mathrm{in}}(S_i) - e_{\mathrm{out}}(S_i) 
   \bigg(  \frac{\vol G}{\vol \overline{S_i}} - 1 \bigg) \\
   & \ge e_{\mathrm{in}}(S_i) - e_{\mathrm{out}}(S_i) > 0 .
\end{align*}
Let $Z = [\uno_{S_1} \cdots \uno_{S_k}]$ and 
consider the $k\times k$ matrix $C = Z^\t AZ$. 
For $i,j=1,\ldots,k$ we have
$$
   C_{ij} = \uno_{S_i}^\t A\uno_{S_j} 
   = \begin{cases}
   e_{\mathrm{in}}(S_i) & \quad i=j \\
   e(S_i,S_j) & \quad i\ne j . \end{cases}
$$ 
The matrix $C$ is symmetric, nonnegative, and 
(strongly) diagonally dominant. Indeed, 
owing to the fact that the $S_j$'s are pairwise disjoint, 
and $E(S_i,\overline{S_i}) \supseteq \cup_{j\ne i} E(S_i,S_j)$,
for $i = 1,\ldots,k$ we have
$$
   C_{ii} = e_{\mathrm{in}}(S_i) > e_{\mathrm{out}}(S_i)
   \ge \sum_{j\ne i} e(S_i,S_j) =
   \sum_{j\ne i} C_{ij} .
$$
As a result, by Gershgorin theorem, $C$ is positive definite. 

Introduce the diagonal matrix 
$\Delta = \mathrm{Diag}(\sqrt{|S_1|},\ldots,\sqrt{|S_k|})^{-1}$. Owing to the orthogonality of the columns of $Z$, 
the matrix $\hat Z = Z\Delta$ has orthonormal columns.
By Sylvester's law of inertia, also the matrix $\Delta C\Delta = \hat Z^\t A \hat Z$ is positive definite. 
From eigenvalue interlacing inequalities \eqref{eq:interlacing},
$$
   \lambda_k(A) \geq \lambda_k(\hat Z^\t A \hat Z)
   = \lambda_k(\Delta C\Delta) > 0 .
$$ 
Finally, using \eqref{eq:Weyl} we conclude
$\lambda_{k-1}(M) \ge \lambda_k(A) > 0$
and the proof is complete.
\end{proof}

In the subsequent theorem we apply an argument
similar to the one in the abovementioned result
directly to the matrix $M$ instead of $A$,
as intermediate step. Before that,
it is convenient to introduce an auxiliary notation.

Let $S_1$ and $S_2$ two disjoint subsets of $V$.
We define their {\em joint modularity} as
$$
   Q(S_1,S_2) = e(S_1,S_2) - 
   \frac{\vol S_1 \vol S_2}{\vol G} .
$$   
Its absolute value $|Q(S_1,S_2)|$
is sometimes referred to as {\em discrepancy between $S_1$ and $S_2$},
see e.g., \cite[\S 5.2]{chung} and \cite{expanders}.
The following properties are straightforward:
\begin{enumerate}
\item
Clearly, $Q(S_1,S_2) = Q(S_2,S_1)$
and $Q(S) = Q(S,\bar S)$. Furthermore, 
we can express the joint modularity of $S_1$ and $S_2$
equivalently as 
$$
   Q(S_1,S_2) = \uno_{S_1}^\t M \uno_{S_2} .
$$
Note that $Q(S_1,S_2)$
is the difference between the overall weight of edges bridging
$S_1$ and $S_2$ and 
its value 
as expected by the (weighted) Chung-Lu model.
\item
From the equation 
$(\uno_{S_1}+\uno_{S_2})^\t M(\uno_{S_1}+\uno_{S_2}) = 
\uno_{S_1}^\t M \uno_{S_1}+\uno_{S_2}^\t M\uno_{S_2}
+ 2 \uno_{S_1}^\t M\uno_{S_2}$ 
we have 
$$
   Q(S_1 \cup S_2) = Q(S_1) + Q(S_2) + 2 Q(S_1,S_2) .
$$
In particular, $Q(S_1,S_2) > 0$
if and only if $Q(S_1 \cup S_2) > Q(S_1) + Q(S_2)$.
Hence, when looking for an optimal partitioning of $G$
into modules, it is necessary that the 
joint modularity of any two subsets is $\le 0$,
otherwise, we can increase the overall modularity 
by merging two subgraphs into one.
\end{enumerate}

The forthcoming theorem proves that,
under ample hypotheses,
the number of positive eigenvalues of $M$, plus $1$,
is actually an upper bound for the cardinality of any
partition of $G$ into modules 
such that if any two subsets are merged
then the overall modularity 
does not increase.

\begin{theorem}   \label{teo:communities2}
Let $\matheul{P} = \{S_1,\ldots,S_k\}$ be a partition of $V$,
with $k \ge 2$,
such that $Q(S_i) > 0$ and 
$Q(S_i,S_j) \leq 0$ for $i\ne j$.
Consider the matrix $C$ such that
$C_{ii} = Q(S_i)$ and 
$C_{ij} = Q(S_i,S_j)$ for $i\ne j$.
If $C$ is irreducible then $M$ has at least $k-1$ positive eigenvalues.
\end{theorem}

\begin{proof}
Consider the matrices $Z = [\uno_{S_1} \cdots \uno_{S_k}]$, 
$\Delta = \mathrm{Diag}(|S_1|,\ldots,|S_k|)^{-1/2}$
and $\hat Z = Z\Delta$.
Then, $C = Z^\t MZ$. 
Furthermore, $C$ is weakly diagonally dominant. Indeed,
$$
   \sum_{j=1}^k C_{ij} =
   \uno_{S_i}^\t M \sum_{j=1}^k \uno_{S_j} =   
   \uno_{S_i}^\t M \uno = 0 .
$$
Using Gershgorin theorem we deduce $C$ is a symmetric 
positive semidefinite matrix, with a zero eigenvalue which 
is associated to the eigenvector $\uno$. 
For a sufficient large $\alpha > 0$ the matrix
$B = \alpha I - C$ is entrywise nonnegative and irreducible.
Hence, by Perron-Frobenius theory, its largest eigenvalue
is simple. Since the eigenspaces of $B$ and $C$
coincide, the zero eigenvalue of $C$ must be simple.

We deduce that 
$C$ has $k-1$ positive eigenvalues.
The same conclusion holds true also for the matrix 
$\Delta C\Delta = \hat Z^\t M \hat Z$, by Sylvester's law of inertia. 
Finally, eigenvalue interlacing inequalities 
\eqref{eq:interlacing} imply
$\lambda_{k-1}(M) \ge \lambda_{k-1}(\hat Z^\t M \hat Z)
= \lambda_{k-1}(\Delta C\Delta) > 0$,
and the proof is complete.
\end{proof}


Note that, in the preceding theorem, irreducibility of $C$
is verified in particular when $Q(S_i,S_j) < 0$ for all $i\neq j$.
That condition is fulfilled 
by any partition maximizing $q_G$ which contains 
the least number of sets among all such partitions
(otherwise we can reduce their
number by merging pairs whose joint
modularity is zero without decreasing the overall modularity).
We get an immediate corollary:


\begin{corollary}   \label{cor:best_P}
Let $\matheul P_\ast$ be a minimal cardinality
partition with $q_G = q(\matheul P_\ast)$, 
interely made by modules. 
Then $\matheul P_\ast$ contains no more than $k+1$ sets, being $k$ the number of positive eigenvalues of $M$.
\end{corollary}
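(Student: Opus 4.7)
The plan is to verify that any partition $\matheul P_\ast = \{S_1, \dots, S_m\}$ realizing $q_G$ meets both hypotheses of Theorem \ref{teo:communities2}, and then simply to invoke that theorem. The degenerate case $m = 1$ is trivial since $1 \le k+1$ for every $k \ge 0$, so I assume $m \ge 2$ throughout.

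First I would establish that $Q(S_i, S_j) \le 0$ for every pair $i \ne j$. Replacing the two blocks $S_i$ and $S_j$ in $\matheul P_\ast$ by their union $S_i \cup S_j$ produces another admissible partition of $V$, so by optimality of $\matheul P_\ast$ it must satisfy $Q(S_i \cup S_j) \le Q(S_i) + Q(S_j)$. Plugging this into the identity $Q(S_i \cup S_j) = Q(S_i) + Q(S_j) + 2 Q(S_i, S_j)$ recorded just before Theorem \ref{teo:communities2} immediately yields $Q(S_i, S_j) \le 0$.

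Next I would upgrade this into the pointwise bound $Q(S_i) \ge 0$ by exploiting the kernel relation $M\uno = 0$. Since the $S_j$ partition $V$, we have $\sum_{j=1}^m \uno_{S_j} = \uno$, and consequently
$$
Q(S_i) + \sum_{j \ne i} Q(S_i, S_j) = \uno_{S_i}^\t M \uno = 0,
$$
so $Q(S_i) = -\sum_{j \ne i} Q(S_i, S_j) \ge 0$ by the previous step. Both hypotheses of Theorem \ref{teo:communities2} are now in force, so that theorem provides at least $m - 1$ positive eigenvalues of $M$; since $k$ is by definition their count, we conclude $m - 1 \le k$, i.e., $m \le k + 1$.

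I do not foresee any genuine obstacle in this argument: the only point worth noticing is that the relation $M\uno = 0$ automatically promotes the pairwise non-merging inequality into the pointwise positivity $Q(S_i) \ge 0$, so that a single optimality property supplies both halves of the hypotheses needed in Theorem \ref{teo:communities2}.
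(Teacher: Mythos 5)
Your proposal is correct and takes the same route as the paper, which simply asserts that an optimal partition satisfies the hypotheses of Theorem \ref{teo:communities2} and invokes that theorem; your write-up merely makes explicit the verification the paper leaves implicit (the merging argument for $Q(S_i,S_j)\le 0$, and $M\uno=0$ forcing $Q(S_i)=-\sum_{j\ne i}Q(S_i,S_j)\ge 0$). Both steps are sound, so no further changes are needed.
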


The following example, which is inspired by a popular
benchmark in the community detection literature,
shows that this result is optimal:

\begin{example}[Circulant ring of clusters]
Given integers $p>2$ and $q>2$,
consider the graph consisting of $n = pq$ vertices,
partitioned as $\matheul{P} = \{S_1,\ldots,S_p\}$;
every $G(S_i)$ is a clique of order $q$; the cliques are arranged 
circularly, and every node of $S_i$ is connected to the
corresponding node of the two neighboring cliques
by an edge whose weight is $\gamma \in(0,1)$
(so that the generalized degree of each node is 
$q-1+2\gamma$).
In this graph, the $p$ cliques have postive modularity,
and in fact are clearly recognizable as 
``communities''. We show hereafter that if 
$0 < \gamma \leq 1/2$ the modularity matrix
of this graph has exactly $p-1$ positive eigenvalues.  

With a natural numbering of the nodes,
the adjacency matrix can be expressed as 
a block circulant matrix with circulant blocks,
$$
   A = \begin{pmatrix}
   B_q & \gamma I & & \gamma I \\
   \gamma I & B_q & \ddots & \\
   & \ddots & \ddots & \gamma I \\
   \gamma I & & \gamma I & B_q \end{pmatrix} ,
$$
where $B_q = \uno\uno^\t - I$ is the adjacency matrix
of a $q$-order clique; and the corresponding
modularity matrix is $M = A - c\uno\uno^\t$, with 
$c = (q-1+2\gamma)/n$, which is still block circulant
with circulant blocks and, furthermore, 
simultaneously diagonalizable with $A$. 
In fact, let $C_p \equiv(c_{ij})$ be the $p\times p$ symmetric circulant matrix such that 
$c_{ij} = 1$ if $|i-j| = 1\mod p$ and $c_{ij} = 0$ otherwise.
Denoting by $F_k$ the unitary Fourier matrix of order $k$,
we have the spectral factorizations
\begin{align*}
   C_p & = F_p 
   \mathrm{Diag}(\lambda^{(p)}_1,\ldots,\lambda^{(p)}_p) F_p^* ,
   \quad \hbox{with }\lambda^{(p)}_j = 2\cos(2\pi(j-1)/p) , \\
   B_q & = F_q \mathrm{Diag}(q-1, -1, \ldots , -1)
    F_q^* .
\end{align*}
Making use of the Kronecker (tensor) product,
the adjacency matrix $A$ admits the decomposition 
$A = I\otimes B_q + \gamma C_p \otimes I$, whence it is
diagonalized by $F_p\otimes F_q$.
Consequently,
the eigenvalues of $A$ are readily computed as follows:
\begin{itemize}
\item[a)]
$q-1 + \gamma \lambda^{(p)}_j$ for $j = 1,\ldots,p$,
each of them having multiplicity $1$; and
\item[b)]
$\gamma \lambda^{(p)}_j - 1$ for $j = 1,\ldots,p$,
each of them having multiplicity $q-1$.
\end{itemize}
A careful observation reveals that, 
if $\gamma \leq \frac12$ then the $p$ largest eigenvalues 
of $A$ are precisely the numbers in the preceding item a),
which are positive; and
the remaining eigenvalues are $\leq 0$.

The eigenvalues of $M$ coincide with 
those of $A$ with the exception of the largest one, 
which is annihilated by the rank-one correction $A - M = c\uno\uno^\t$.
Consequently, the matrix $M$ has at least $p-1$
positive eigenvalues; for all $0<\gamma\leq\frac12$, 
they are exactly $p-1$, that is, the number of 
``communities'' minus one. 
It is interesting to note that eigenvectors associated to these eigenvalues lie in the span of $\Re(f_k)\otimes \uno$
and $\Im(f_k)\otimes \uno$, where $f_k$ is the $k$-th
column of $F_p$; in particular, they are constant 
within each clique. Furhermore, for any two distinct 
integers $i,j = 1,\ldots,p$,
one such eigenvector assumes opposite signs 
on $S_i$ and $S_j$,
so that communities in this graph are demarcated 
precisely by modularity nodal domains
associated to positive eigenvalues of $M$.
\end{example}

\section{A Cheeger-type inequality}

Let $G = (V,E)$ an unweighted graph.
The number
$$   
   h_G = \min_{\substack{S\subset V\\0<|S| \leq \frac{n}{2}}}
   \frac{|E(S,\bar S)|}{|S|} 
$$
is one of best known topological invariants of $G$, 
as it establishes a wealth of deep and important relationships with various areas of mathematics \cite{chung,expanders}.
Its connection with graph partitioning,
and discrete versions of the isoperimetric problem, is apparent. Hence, it is of no surprise that 
various relationships have been uncovered between $h_G$
and $a(G)$, also under slightly different definitions.

The bound $h_G \geq a(G)/2$
can be obtained by rather elementary arguments. 
Various converse inequalities exist and  
bear the name of {\em Cheeger inequality}, 
analogously to a classical result in Riemannian geometry
that relates the solution of the isoperimetric problem
to the smallest positive eigenvalue of the Laplacian differential operator
on manifolds.
For example, it is known that if $G$ is a $k$-regular graph 
(that is, $d_i = k$ for $i=1,\ldots,n$) then
$h_G \leq \sqrt{2ka(G)}$ \cite[Thm.\ 4.11]{expanders}.
In the forthcoming Corollary \ref{cor:cheeger} we provide a Cheeger-type inequality 
between modularity and algebraic modularity of a regular graph.
Although practical graphs are seldom regular, 
that hypothesis is important to obtain a converse result
to Theorem \ref{thm:qprimeG}.

\begin{theorem}
Let $G=(V,E)$ be a connected, $k$-regular graph,
and let $f$ be an eigenvector associated to $m(G)$:
$Mf = m(G)f$.
Let $w_1 \ge w_2 \ge \ldots \ge w_n$ be the values of
$f_1,\ldots,f_n$ sorted in nonincreasing order.
Introduce the sets
$$
   S_i = \{j: f_j \le w_i\} , \qquad i = 1,\ldots,n,
$$
and let $Q_\star = \max_i Q(S_i)$.
Then,
$$
   Q_\star \ge \frac{1}{w_1-w_n}\bigg(
   \frac{k}{2} \|f\|_1 
   - \|f\|_2 \sqrt{(k-m(G))\frac{kn}{2}} \bigg) .
$$
\end{theorem}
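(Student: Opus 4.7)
My plan follows the classical Cheeger ``sweep'' strategy, adapted to the modularity matrix. The starting point is that since $m(G)$ is achieved as the maximum of $x^\t Mx/x^\t x$ over $\uno^\perp$, any eigenvector $f$ associated to a nonzero $m(G)$ is automatically orthogonal to $\uno$, hence $\sum_j f_j=0$; the degenerate case $m(G)=0$ with $f=\uno$ renders the inequality trivial, because then $w_1=w_n$ and the right-hand side is nonpositive. For a $k$-regular graph the Laplacian $L=kI-A$ and the modularity matrix $M=A-(k/n)\uno\uno^\t$ satisfy $Lf=kf-Mf-(k/n)(\uno^\t f)\uno=(k-m(G))f$ on $\uno^\perp$, so that $f^\t Lf=(k-m(G))\|f\|_2^2$.

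For $t\in[w_n,w_1]$ I define the threshold set $S(t)=\{j:f_j\le t\}$. This is piecewise constant in $t$, equaling $S_{i+1}$ on $[w_{i+1},w_i)$, so $Q(S(t))\le Q_\star$ pointwise and
\begin{equation*}
   \int_{w_n}^{w_1} Q(S(t))\,dt \;\le\; Q_\star(w_1-w_n).
\end{equation*}
The heart of the argument is a lower bound for the same integral. Using $\vol G=kn$ and \eqref{modularity-3} I rewrite $Q(S)=(k/n)|S||\bar S|-|\partial S|$ and apply two standard co-area identities. For any edge $ij$, the set of $t$ such that $ij\in\partial S(t)$ is an interval of length $|f_i-f_j|$; for any ordered pair $(i,j)$, the set of $t$ such that $i\in S(t)$ and $j\in\bar S(t)$ has length $\max(f_j-f_i,0)$. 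This yields
\begin{equation*}
   \int_{w_n}^{w_1}\!|\partial S(t)|\,dt = \sum_{ij\in E}|f_i-f_j|, \qquad
   \int_{w_n}^{w_1}\!|S(t)||\bar S(t)|\,dt = \tfrac12\sum_{i,j}|f_i-f_j| .
\end{equation*}

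The remaining two estimates are routine. For the pair-sum, the zero-mean property of $f$ gives $\sum_j|f_i-f_j|\ge\bigl|\sum_j(f_i-f_j)\bigr|=n|f_i|$ for every $i$, and summing over $i$ yields $\sum_{i,j}|f_i-f_j|\ge n\|f\|_1$. For the edge sum, one Cauchy--Schwarz step together with $|E|=kn/2$ produces
\begin{equation*}
   \sum_{ij\in E}|f_i-f_j| \le \sqrt{|E|}\sqrt{f^\t Lf} = \|f\|_2\sqrt{(k-m(G))\tfrac{kn}{2}} .
\end{equation*}
Assembling these bounds into the identity $\int Q(S(t))\,dt = (k/(2n))\sum_{i,j}|f_i-f_j| - \sum_{ij\in E}|f_i-f_j|$ delivers $Q_\star(w_1-w_n)\ge (k/2)\|f\|_1-\|f\|_2\sqrt{(k-m(G))kn/2}$, which is the claim after dividing by $w_1-w_n$. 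The main obstacle is the co-area bookkeeping in the level-set step, particularly the care required in distinguishing ordered from unordered pairs and handling possible ties among the $w_i$; once those identities are in place, the zero-mean lower bound and the single Cauchy--Schwarz are entirely standard.
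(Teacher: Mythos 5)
Your proof is correct and follows essentially the same route as the paper's: a Cheeger-type sweep over the threshold sets, Cauchy--Schwarz applied to $\sum_{ij\in E}|f_i-f_j|$ against $f^\t Lf=(k-m(G))\|f\|_2^2$, and a lower bound of $\tfrac{k}{2}\|f\|_1$ on the volume-product term. The only differences are presentational: you phrase the sweep as a co-area integral where the paper telescopes a discrete sum, and you derive $\sum_{i,j}|f_i-f_j|\ge n\|f\|_1$ from the triangle inequality and $\uno^\t f=0$, whereas the paper proves the equivalent bound $\sum_{i}\sum_{j\le i}f_j\ge\tfrac{n}{4}\|f\|_1$ by pairing partial sums --- your version of that step is arguably cleaner.
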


\begin{proof}
We start by noticing that $f$ is a Fiedler vector of $G$.
Indeed, if $G$ is $k$-regular then 
the matrix $L_0$ in \eqref{eq:M_L} becomes 
$L_0 = kI - (k/n)\uno\uno^T$ whence $L_0f = kf$; 
moreover, from the equation $m(G) = k-a(G)$ and the decomposition  
\eqref{eq:M_L} we obtain 
$Lf = a(G)f$, that is, $f$ is a Fiedler vector.

Consider the quantity
$$
   \sigma = \sum_{i\sim j}|f_i - f_j| ,
$$
where the sum runs on the edges of $G$, every edge being counted only once.
By Cauchy-Schwartz inequality and \eqref{eq:vLv}, 
\begin{align*}
   \sigma & \le 
   \sqrt{\sum_{i\sim j} (f_i-f_j)^2}
   \sqrt{\sum_{i\sim j} 1} =  \|f\|_2 \sqrt{a(G)\frac{\vol G}{2}} 
   = \|f\|_2 \sqrt{(k-m(G))\frac{kn}{2}} . 
\end{align*}   
For ease of notation, we re-number the vertices of $G$ 
so that $f_1\ge f_2 \ge \ldots \ge f_n$.
In this way, the sets $S_1,\ldots,S_n$ introduced in the claim 
are given by $S_i = \{1,\ldots,i\}$. 
Furthermore, the edge boundary $\partial S_i$ is the set of
all edges having one vertex in $\{1,\ldots,i\}$ and the other
in $\{i+1,\ldots,n\}$.
Let $Q_\star = \max_i Q(S_i)$. Using
$$
   |\partial S_i| \ge 
   \frac{\vol S_i\, \vol \overline S_i}{\vol G} 
   - Q_\star = 
   \frac{k\, i (n-i)}{n} - Q_\star 
$$
we obtain  
\begin{align*}
   \sigma & = \sum_{\substack{i\sim j \\ i < j}} (f_i-f_j)
   = \sum_{\substack{i\sim j \\ i < j}} 
   \sum_{\ell = i}^{j-1} (f_\ell-f_{\ell+1}) 
   = \sum_{i = 1}^{n-1} 
   (f_i-f_{i+1}) \cdot |\partial S_i| \\  
   & \ge k \sum_{i = 1}^{n-1} 
   (f_i-f_{i+1})\frac{i (n-i)}{n}  -
   Q_\star \sum_{i = 1}^{n-1} 
   (f_i-f_{i+1})  \\
   & = k \sum_{i = 1}^{n} 
   f_i\frac{n+1-2i}{n} -
   Q_\star (f_1-f_{n})  \\
   & = \frac{2k}{n} \sum_{i=1}^n \sum_{j=1}^i f_j 
   - Q_\star (f_1-f_{n}) .
\end{align*}
The last passages are obtained by collapsing the
telescopic sums, rearranging terms, 
and exploiting the equality $\sum_{i=1}^n f_i = 0$.
Now, let $m$ be an integer such that
\begin{equation}   \label{eq:f1fn}
   f_1 \ge \ldots \ge f_m \ge 0 \ge f_{m+1} \ge \ldots \ge f_n .
\end{equation}
Owing to the fact that
$\sum_if_i = 0$ we have 
$$
   \max_i \sum_{j=1}^i f_j = \sum_{j=1}^m f_j = 
   \frac12(f_1+\cdots +f_m + |f_{m+1}| + \cdots + |f_n|) = 
   \frac12 \|f\|_1 .
$$
Introduce the notation $F_i = \sum_{j=1}^i f_j$.
By virtue of the inequalities \eqref{eq:f1fn},
for all $j = 0,\ldots,m$ and $k = 0,\ldots,n-m$ we have
$$
   F_j + F_{m-j} \geq F_m , \qquad
   F_{m+k} + F_{n-k} \geq F_m .
$$
Thus we obtain
\begin{align*}
   \sum_{i=1}^n \sum_{j=1}^i f_j =
   \sum_{i=1}^n F_i & = 
   \frac12 \sum_{j=1}^{m-1} (F_j+F_{m-j}) + \frac12 \sum_{k=0}^{n-m} (F_{m+k}+F_{n-k}) \\
   & \geq 
   \frac{n}{2} F_m = \frac{n}{4}\|f\|_1 .
\end{align*}
Putting it all together we get
$$
   \frac{k}{2} \|f\|_1 
   - Q_\star (f_1-f_{n})
   \le \sigma \le \|f\|_2 \sqrt{(k-m(G))\frac{kn}{2}} ,
$$
whence we obtain the claim. 
\end{proof}

\begin{corollary}   \label{cor:cheeger}
If $G=(V,E)$ is a connected, $k$-regular graph then
$$
   \frac{1}{2n} - \sqrt{\frac{k-m(G)}{2k}} 
   \leq q_G' \leq   
   \frac{m(G)}{2k}  .
$$
\end{corollary}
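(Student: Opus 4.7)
The plan is to establish the two bounds separately. The upper bound $q_G' \leq m(G)/(2k)$ is an immediate specialization of Theorem \ref{thm:qprimeG}: in a $k$-regular graph, $\langle d\rangle = \vol G / n = k$, and Theorem \ref{thm:qprimeG} yields the claim directly.

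For the lower bound I would combine two ingredients. First, the passage from $Q_\star$ to $q_G'$: since each $S_i$ in the preceding theorem defines the 2-partition $\{S_i,\bar S_i\}$, and $q(S) = 2Q(S)/\vol G$ by \eqref{eq:q(S)} with $\vol G = kn$, one obtains
\[
q_G' \;\geq\; \frac{2\,Q_\star}{kn}
\;\geq\; \frac{2}{k\,n\,(w_1-w_n)}\bigg(\frac{k}{2}\|f\|_1 - \|f\|_2\sqrt{(k-m(G))\frac{kn}{2}}\bigg) .
\]
Second, the right-hand side must be converted into a bound independent of $f$. Since $f$ is defined only up to a nonzero scalar, I would fix the normalization $w_1 - w_n = 1$. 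As $f$ is an eigenvector of $M$ associated to the nonzero eigenvalue $m(G)$, it is orthogonal to $\uno$, so $\sum_i f_i = 0$ and in particular $w_1 \geq 0 \geq w_n$.

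From the zero-sum identity, $\|f\|_1 = 2\sum_{f_i>0}f_i \geq 2\max(w_1,|w_n|) \geq w_1 - w_n = 1$, while the pointwise bound $|f_i|\leq \max(w_1,|w_n|)\leq 1$ gives $\|f\|_2 \leq \sqrt{n}$. Substituting these uniform bounds into the displayed inequality and simplifying produces the stated lower bound. The main obstacle I foresee is obtaining the constant $1/(2n)$ exactly: the scaling of $f$ interacts with all three quantities $w_1 - w_n$, $\|f\|_1$, and $\|f\|_2$, and a careless normalization can introduce spurious factors of two; the pairing described above, which couples the normalization $w_1 - w_n = 1$ with the zero-sum lower bound on $\|f\|_1$ and the pointwise upper bound on $\|f\|_2$, is the most natural one consistent with the target inequality.
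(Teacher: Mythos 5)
Your upper bound is correct and is exactly the paper's route: in a $k$-regular graph $\langle d\rangle = \vol G/n = k$, so Theorem \ref{thm:qprimeG} gives $q_G'\le m(G)/(2k)$ at once.

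The lower bound, however, has a genuine gap in the final bookkeeping, precisely at the point you flagged as delicate. Your normalization $w_1-w_n=1$ and your (individually correct) estimates $\|f\|_1\ge w_1+|w_n|=1$ and $\|f\|_2\le\sqrt{n}\,\|f\|_\infty\le\sqrt{n}$, inserted into the theorem and into $q_G'\ge 2Q_\star/(kn)$, give
$$
   q_G' \;\ge\; \frac{2}{kn}\left(\frac{k}{2}-\sqrt{n}\sqrt{(k-m(G))\frac{kn}{2}}\right)
   \;=\; \frac{1}{n}-2\sqrt{\frac{k-m(G)}{2k}} ,
$$
which is exactly \emph{twice} the right-hand side of the corollary. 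A bound $q_G'\ge 2B$ implies $q_G'\ge B$ only when $B\ge 0$, and here $B=\frac{1}{2n}-\sqrt{(k-m(G))/(2k)}$ is negative for essentially every graph of interest (the term $1/(2n)$ is tiny unless $m(G)$ is extremely close to $k$); so your chain proves a different statement that does not contain the corollary. Moreover the defect cannot be repaired by term-by-term estimates under your normalization: to reach $\frac{1}{2n}-\sqrt{(k-m(G))/(2k)}$ one would need simultaneously $\|f\|_1\ge\frac12$ and $\|f\|_2\le\frac{\sqrt{n}}{2}$, and the latter forces $\|f\|_\infty\le\frac12(w_1+|w_n|)$, which holds only when $w_1=|w_n|$. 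The paper instead writes $w_1-w_n=w_1+|w_n|\le 2\|f\|_\infty$, factors $1/(2\|f\|_\infty)$ out of the whole (nonnegative) bracket, and then uses $\|f\|_1/\|f\|_\infty\ge1$ and $\|f\|_2/\|f\|_\infty\le\sqrt{n}$; this yields $Q_\star\ge \frac{k}{4}-\frac{\sqrt{n}}{2}\sqrt{(k-m(G))\frac{kn}{2}}$ and hence precisely the stated constants. You should either adopt that chain or restate the corollary with your (incomparable) constants.
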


\begin{proof}
The upper bound directly follows by Theorem \ref{teo:communities1} in the $k$-regular case. In the notations of the preceding theorem we observe that
$w_1-w_{n} = w_1+|w_n| \le 2 \|f\|_\infty$.
Using the inequality 
$\|f\|_2 \le \sqrt{n}\|f\|_\infty$ 
and $\|f\|_\infty \le \|f\|_1$ we obtain
$$
   Q_\star \ge \frac{k\|f\|_1}{4\|f\|_\infty} 
   - \frac{\|f\|_2}{2\|f\|_\infty}\sqrt{(k-m(G))\frac{kn}{2}}
   \ge \frac{k}{4} 
   - \frac{\sqrt{n}}{2}\sqrt{(k-m(G))\frac{kn}{2}} .
$$
To complete the proof it is sufficient to 
observe that, in view of \eqref{eq:q(S)},
we have $q_G' \geq 2Q_\star/\vol G = 2Q_\star/(kn)$.
\end{proof}

\section{Concluding remarks}

In this paper we have studied the community detection problem trough modularity optimisation from an uncommon algebraic point of view. In particular we have tried to propose 
popular concepts from complex networks and physics literatures in a mathematical formalism involving mainly linear algebra and matrix theory. 

We introduce the concept of algebraic modularity 
of a graph, allowing to clarify the difference between indivisible graph and algebraically indivisible graph, often  used with not much attention interchangeably one with the other.
We focus our attention on the nodal domains induced by the eigenvectors of the modularity matrix and we derive a nodal domain theorem for such eigenvectors, in complete analogy with the well known Fiedler vector theorem for the Laplacian matrix \cite{fiedler-vector}, and some further developments proposed 
more recently in \cite{nodal-domain-theorem, duval-nodal-domains, powers-graph-eigenvector}.
However, unlike in the Laplacian case,
nodal domains arising with modularity matrices are naturally endowed by a sign, with different properties for positive and negative nodal domains. 

Then we consider the possible relationship between 
the number of modules 
in $G$ and the number of positive eigenvalues of its modularity matrix. Newman
claimed in \cite{newman-eigenvectors}
that the number of positive eigenvalues of $M$
is related to the number of communities recognizable
in the graph $G$, but his claim was based on rather informal arguments. Our analysis of $M$ instead tries to support this claim showing, in particular, that the presence of communities in $G$ implies that the spectrum of $M$ at least partially lies on the positive axis. We would point out here that a reverse implication is realistic and desirable, but is still an open problem.

Finally we focus the attention 
on Cheeger-type inequalities,
discovering 
that a nice estimate elapses between modularity and algebraic modularity of $G$.
At present, our result is limited to regular graphs;
its possible extension to more general graphs 
seems to be a major task and is left as an open problem.

As the importance of the community detection problem is apparent, and mo\-du\-la\-ri\-ty-based techniques are by far the most popular in this ambit, we believe that the modularity matrix $M$ could be considered as a relevant matrix in algebraic graph theory, together with adjacency and Laplacian matrices.
The results we obtain give rise to a first spectral graph analysis aimed at the problems of existence, estimation and localization of optimal subdivisions of the graph into communities.
Our results adhere to modularity-related definitions 
borrowed from current literature. Probably,
modified (maybe, ``normalized'') versions of modularity
matrices and functions
may lead to conclusions different from those presented here.

\section*{Acknowledgements}
The authors appreciate two anonymous referees for their useful comments and suggestions, in particular, those of 
including Remark 1.1 and
Example 6.4 in the final version of this paper.

\bibliography{./bib_preprint}
\bibliographystyle{plain}

\end{document}